\documentclass[12pt]{amsart}      
\usepackage{txfonts}
\usepackage{amssymb}
\usepackage{eucal}
\usepackage{graphicx}
\usepackage{amsmath}
\usepackage{amscd}
\usepackage[all]{xy}           
\usepackage[active]{srcltx} 

\usepackage{amsfonts,latexsym}
\usepackage{xspace}
\usepackage{epsfig}
\usepackage{float}
\usepackage{color}
\usepackage{fancybox}
\usepackage{colordvi}
\usepackage{multicol}
\usepackage{colordvi}
\usepackage[hypertex]{hyperref} 

\topmargin -.8cm \textheight 22.8cm \oddsidemargin 0cm
\evensidemargin -0cm \textwidth 16.3cm



\newtheorem{theorem}{Theorem}[section]
\newtheorem{proposition}[theorem]{Proposition}
\newtheorem{definition}[theorem]{Definition}
\newtheorem{lemma}[theorem]{Lemma}
\newtheorem{corollary}[theorem]{Corollary}
\newtheorem{prop-def}{Proposition-Definition}[section]
\newtheorem{coro-def}{Corollary-Definition}[section]

\newtheorem{remark}[theorem]{Remark}

\newtheorem{problem}[theorem]{Problem}

\newcommand{\nc}{\newcommand}
\nc{\tred}[1]{\textcolor{red}{#1}}
\nc{\tblue}[1]{\textcolor{blue}{#1}}
\nc{\tgreen}[1]{\textcolor{green}{#1}}
\nc{\tpurple}[1]{\textcolor{purple}{#1}}
\nc{\btred}[1]{\textcolor{red}{\bf #1}}
\nc{\btblue}[1]{\textcolor{blue}{\bf #1}}
\nc{\btgreen}[1]{\textcolor{green}{\bf #1}}
\nc{\btpurple}[1]{\textcolor{purple}{\bf #1}}

\renewcommand{\Bbb}{\mathbb}


\newcommand{\efootnote}[1]{}

\renewcommand{\textbf}[1]{}

\newcommand{\delete}[1]{}
\nc{\dfootnote}[1]{{}}          
\nc{\ffootnote}[1]{\dfootnote{#1}}
\nc{\mfootnote}[1]{\footnote{#1}} 
\nc{\ofootnote}[1]{\footnote{\tiny Older version: #1}}

\nc{\mlabel}[1]{\label{#1}}  
\nc{\mcite}[1]{\cite{#1}}  
\nc{\mref}[1]{\ref{#1}}  
\nc{\mbibitem}[1]{\bibitem{#1}} 

\delete{
\nc{\mlabel}[1]{\label{#1}  
{\hfill \hspace{1cm}{\bf{{\ }\hfill(#1)}}}}
\nc{\mcite}[1]{\cite{#1}{{\bf{{\ }(#1)}}}}  
\nc{\mref}[1]{\ref{#1}{{\bf{{\ }(#1)}}}}  
\nc{\mbibitem}[1]{\bibitem[\bf #1]{#1}} 
}



\nc{\mtail}{\leq_t}
\nc{\mhead}{\leq_h}
\nc{\rk}{\mathrm{rk}}
\nc{\mset}[1]{\tilde{#1}}
\nc{\pa}{\frakL}
\nc{\arr}{\rightarrow}
\nc{\lu}[1]{(#1)}
\nc{\mult}{\mrm{mult}}
\nc{\diff}{\mathrm{Der}}
\nc{\indiff}{\mathrm{InDer}}
\nc{\outdiff}{\mathrm{OutDer}}
\nc{\conmat}{connection matrix\xspace}
\nc{\bounmat}{boundary matrix\xspace}
\nc{\pcyc}{\mathfrak c}
\nc{\calpa}{\calp_A}
\nc{\calpal}{\Gamma_{AL}}
\nc{\calpc}{\calp_L}
\nc{\frakDa}{\frakD_1}
\nc{\frakDal}{\frakD_2}
\nc{\frakDc}{\frakD_L}
\nc{\frakDv}{\frakD_V}
\nc{\frakDp}{\frakD_F}
\nc{\frakBa}{\frakB_1}
\nc{\frakBal}{\frakB_2}
\nc{\frakBc}{\frakB_L}
\nc{\frakBv}{\frakB_V}

\nc{\bin}[2]{ (_{\stackrel{\scs{#1}}{\scs{#2}}})}  
\nc{\binc}[2]{ \left (\!\! \begin{array}{c} \scs{#1}\\
    \scs{#2} \end{array}\!\! \right )}  
\nc{\bincc}[2]{  \left ( {\scs{#1} \atop
    \vspace{-1cm}\scs{#2}} \right )}  
\nc{\bs}{\bar{S}}
\nc{\cosum}{\sqsubset}
\nc{\la}{\longrightarrow}
\nc{\rar}{\rightarrow}
\nc{\dar}{\downarrow}
\nc{\dprod}{**}
\nc{\dap}[1]{\downarrow \rlap{$\scriptstyle{#1}$}}
\nc{\md}{\mathrm{dth}}
\nc{\uap}[1]{\uparrow \rlap{$\scriptstyle{#1}$}}
\nc{\defeq}{\stackrel{\rm def}{=}}
\nc{\disp}[1]{\displaystyle{#1}}
\nc{\dotcup}{\ \displaystyle{\bigcup^\bullet}\ }
\nc{\gzeta}{\bar{\zeta}}
\nc{\hcm}{\ \hat{,}\ }
\nc{\hts}{\hat{\otimes}}
\nc{\barot}{{\otimes}}
\nc{\free}[1]{\bar{#1}}
\nc{\uni}[1]{\tilde{#1}}
\nc{\hcirc}{\hat{\circ}}
\nc{\lleft}{[}
\nc{\lright}{]}
\nc{\lc}{\lfloor}
\nc{\rc}{\rfloor}
\nc{\curlyl}{\left \{ \begin{array}{c} {} \\ {} \end{array}
    \right .  \!\!\!\!\!\!\!}
\nc{\curlyr}{ \!\!\!\!\!\!\!
    \left . \begin{array}{c} {} \\ {} \end{array}
    \right \} }
\nc{\longmid}{\left | \begin{array}{c} {} \\ {} \end{array}
    \right . \!\!\!\!\!\!\!}
\nc{\onetree}{\bullet}
\nc{\ora}[1]{\stackrel{#1}{\rar}}
\nc{\ola}[1]{\stackrel{#1}{\la}}
\nc{\ot}{\otimes}
\nc{\mot}{{{\boxtimes\,}}}
\nc{\otm}{\overline{\boxtimes}}
\nc{\sprod}{\bullet}
\nc{\scs}[1]{\scriptstyle{#1}}
\nc{\mrm}[1]{{\rm #1}}
\nc{\margin}[1]{\marginpar{\rm #1}}   
\nc{\dirlim}{\displaystyle{\lim_{\longrightarrow}}\,}
\nc{\invlim}{\displaystyle{\lim_{\longleftarrow}}\,}
\nc{\mvp}{\vspace{0.3cm}}
\nc{\tk}{^{(k)}}
\nc{\tp}{^\prime}
\nc{\ttp}{^{\prime\prime}}
\nc{\svp}{\vspace{2cm}}
\nc{\vp}{\vspace{8cm}}
\nc{\proofbegin}{\noindent{\bf Proof: }}
\nc{\proofend}{$\blacksquare$ \vspace{0.3cm}}
\nc{\modg}[1]{\!<\!\!{#1}\!\!>}
\nc{\intg}[1]{F_C(#1)}
\nc{\lmodg}{\!<\!\!}
\nc{\rmodg}{\!\!>\!}
\nc{\cpi}{\widehat{\Pi}}
\nc{\sha}{{\mbox{\cyr X}}}  
\nc{\shap}{{\mbox{\cyrs X}}} 
\nc{\shpr}{\diamond}    
\nc{\shp}{\ast}
\nc{\shplus}{\shpr^+}
\nc{\shprc}{\shpr_c}    
\nc{\msh}{\ast}
\nc{\zprod}{m_0}
\nc{\oprod}{m_1}
\nc{\vep}{\varepsilon}
\nc{\labs}{\mid\!}
\nc{\rabs}{\!\mid}

\nc{\mmbox}[1]{\mbox{\ #1\ }}
\nc{\fp}{\mrm{FP}} \nc{\rchar}{\mrm{char}} \nc{\End}{\mrm{End}} \nc{\Fil}{\mrm{Fil}}
\nc{\Mor}{Mor\xspace}
\nc{\gmzvs}{gMZV\xspace}
\nc{\gmzv}{gMZV\xspace}
\nc{\mzv}{MZV\xspace}
\nc{\mzvs}{MZVs\xspace}
\nc{\Hom}{\mrm{Hom}} \nc{\id}{\mrm{id}} \nc{\im}{\mrm{im}}
\nc{\incl}{\mrm{incl}} \nc{\map}{\mrm{Map}} \nc{\mchar}{\rm char}
\nc{\nz}{\rm NZ} \nc{\supp}{\mathrm Supp}

\nc{\Alg}{\mathbf{Alg}}
\nc{\Bax}{\mathbf{Bax}}
\nc{\bff}{\mathbf f}
\nc{\bfk}{{\bf k}}
\nc{\bfone}{{\bf 1}}
\nc{\bfx}{\mathbf x}
\nc{\bfy}{\mathbf y}
\nc{\base}[1]{\bfone^{\otimes ({#1}+1)}} 
\nc{\Cat}{\mathbf{Cat}}

\nc{\detail}{\marginpar{\bf More detail}
    \noindent{\bf Need more detail!}
    \svp}
\nc{\Int}{\mathbf{Int}}
\nc{\Mon}{\mathbf{Mon}}
\nc{\rbtm}{{shuffle }}
\nc{\rbto}{{Rota-Baxter }}
\nc{\remarks}{\noindent{\bf Remarks: }}
\nc{\Rings}{\mathbf{Rings}}
\nc{\Sets}{\mathbf{Sets}}

\nc{\BA}{{\Bbb A}} \nc{\CC}{{\Bbb C}} \nc{\DD}{{\Bbb D}}
\nc{\EE}{{\Bbb E}} \nc{\FF}{{\Bbb F}} \nc{\GG}{{\Bbb G}}
\nc{\HH}{{\Bbb H}} \nc{\LL}{{\Bbb L}} \nc{\NN}{{\Bbb N}}
\nc{\KK}{{\Bbb K}} \nc{\QQ}{{\Bbb Q}} \nc{\RR}{{\Bbb R}}
\nc{\TT}{{\Bbb T}} \nc{\VV}{{\Bbb V}} \nc{\ZZ}{{\Bbb Z}}


\nc{\cala}{{\mathcal A}} \nc{\calc}{{\mathcal C}}
\nc{\cald}{{\mathcal D}} \nc{\cale}{{\mathcal E}}
\nc{\calf}{{\mathcal F}} \nc{\calg}{{\mathcal G}}
\nc{\calh}{{\mathcal H}} \nc{\cali}{{\mathcal I}}
\nc{\call}{{\mathcal L}} \nc{\calm}{{\mathcal M}}
\nc{\caln}{{\mathcal N}} \nc{\calo}{{\mathcal O}}
\nc{\calp}{{\mathcal P}} \nc{\calr}{{\mathcal R}}
\nc{\cals}{{\mathcal S}}
\nc{\calt}{{\mathcal T}} \nc{\calw}{{\mathcal W}}
\nc{\calk}{{\mathcal K}} \nc{\calx}{{\mathcal X}}
\nc{\CA}{\mathcal{A}}

\nc{\fraka}{{\mathfrak a}}
\nc{\frakA}{{\mathfrak A}}
\nc{\frakb}{{\mathfrak b}}
\nc{\frakB}{{\mathfrak B}}
\nc{\frakC}{{\mathfrak C}}
\nc{\frakD}{{\mathfrak D}}
\nc{\frakg}{{\mathfrak g}}
\nc{\frakH}{{\mathfrak H}}
\nc{\frakL}{{\mathfrak L}}
\nc{\frakM}{{\mathfrak M}}
\nc{\bfrakM}{\overline{\frakM}}
\nc{\frakm}{{\mathfrak m}}
\nc{\frakP}{{\mathfrak P}}
\nc{\frakN}{{\mathfrak N}}
\nc{\frakp}{{\mathfrak p}}
\nc{\frakR}{{\mathfrak R}}
\nc{\frakS}{{\mathfrak S}}

\font\cyr=wncyr10
\font\cyrs=wncyr7
\nc{\li}[1]{\textcolor{red}{Li:#1}}
\nc{\yang}[1]{\textcolor{blue}{Yang: #1}}
\nc{\liu}[1]{\textcolor{green}{Liu: #1}}
\nc{\del}[1]{\textcolor{red}{Del: #1}}

\begin{document}

\title[Genuses of cluster quivers of finite mutation type]
{Genuses of cluster quivers of finite mutation type}
%
%
\author[Fang Li]{Fang Li}
\address{Department of Mathematics, Zhejiang University, Hangzhou 310027, China}
\email{fangli@zju.edu.cn}
\author[Jichun Liu]{Jichun Liu$^{*}$}
\address{Department of Mathematics, Zhejiang University, Hangzhou 310027, China}
\email{liujc1982@126.com}
\author[Yichao Yang]{Yichao Yang}
\address{Department of Mathematics, Zhejiang University, Hangzhou 310027, China}
\email{yyc880113@163.com}


\renewcommand{\thefootnote}{\alph{footnote}}
\setcounter{footnote}{-1} \footnote{}
\renewcommand{\thefootnote}{\alph{footnote}}
\setcounter{footnote}{-1} \footnote{\emph{2010 Mathematics Subject
Classification}: 05C10; 13F60; 05E10; 05E15.}

\renewcommand{\thefootnote}{\alph{footnote}}
\setcounter{footnote}{-1} \footnote{\emph{Keywords}: cluster quiver;
finite mutation type; genus; triangulation of surface.}

\renewcommand{\thefootnote}{\alph{footnote}}
\setcounter{footnote}{-1} \footnote{*: Corresponding author}

\begin{abstract}
In this paper, we study the  distribution of the genuses of cluster
quivers of finite mutation type. First, we prove that in the $11$
exceptional cases, the distribution of genuses is $0$ or $1$. Next,
we consider the relationship between the genus of an oriented
surface and that of cluster quivers from this surface. It is
verified that the genus of an oriented surface is an upper bound for
the genuses of cluster quivers from this surface. Furthermore, for
any non-negative integer $n$ and a closed oriented surface of genus
$n$, we show that there always exist a set of punctures and a
triangulation of this surface such that the corresponding cluster
quiver from this triangulation is exactly of genus $n$.
\end{abstract}

\maketitle

\tableofcontents

\setcounter{section}{0}

\section{Introduction}

Cluster quiver is a valuable notion in the theory of cluster algebras.
Cluster algebras were introduced by Fomin and Zelevinsky in the
famous paper \cite{fz1}. Since then this subject has been studied
extensively by many mathematicians. The original motivation was to
give a combinatorial characterization of dual canonical bases in the theory of quantum groups, and for the study of total positivity for
algebraic groups. Now cluster algebras are connected to various fields of
mathematics such as representation theory, Poisson geometry,
algebraic geometry, Lie theory, combinatorics and so on.
One knows that cluster algebras are commutative algebras equipped with
a distinguished set of generators, i.e.,
cluster variables.

Two types of cluster algebras are of important interests: finite
type and finite mutation type. The former is a special case of the
latter. Cluster algebras of finite type have been completely
classified in \cite{fz2} and skew-symmetric cluster algebras of
finite mutation type have been completely classified in \cite{fst}.
The classification of cluster algebras of finite type is identical
to the Cartan-Killing classification of semisimple Lie algebras and
finite root systems. For a cluster algebra of finite type, there is
a one-one correspondence between the set of cluster variables and
that of almost positive roots (consisting of positive roots and
negative simple roots). Additionally, the classification of
skew-symmetric cluster algebras (equivalently, the classification of
cluster quivers) of finite mutation type tells us that almost all
skew-symmetric cluster algebras (equivalently, cluster quivers) of
this type come from triangulations of surfaces except for $11$
exceptional cases.

Given an oriented 2-dimensional Riemann surface $S$ with boundary
$\partial{S}$, let $M\subset S$ be a finite set of {\bf marked
points} such that each connected boundary component contains at
least one such point. Marked points in the interior of $S$ are
called {\bf punctures}. The pair $(S, M)$ is simply called a {\bf
surface}. An {\bf arc}(\cite{fosth}) is the homotopy class of a
curve $\gamma$ in $S$ whose endpoints come from $M$ such that

$\bullet$ $\gamma$ does not intersect itself, except that its
endpoints may coincide;

$\bullet$ except for the endpoints, $\gamma$ is disjoint from $M$
and $\partial{S}$;

$\bullet$ $\gamma$ does not cut out an unpunctured monogon or an
unpunctured digon.

An {\bf ideal triangulation} $T$ is a maximal set of non-crossing
(i.e., there are no intersections in the interior of $S$) arcs. For
the details of the construction of cluster quivers from
triangulations of surfaces, see section 2.2.

{\em In this paper, all surfaces we consider are oriented surfaces;
all subgraphs and subquivers are full.}

In topological graph theory, the {\bf genus} of a graph is the
minimal genus of surfaces where the graph can be drawn without
crossing. The {\bf genus} of a quiver is defined to be that of its
underlying graph. When discussing the genus of a quiver, one only
needs to consider its simple underlying graph (without
multiple edges and orientation). A graph (resp.
quiver) is {\bf planar} if it is of genus $0$. It is well known that
genus is a topological invariant for surfaces, as well as for
topological graphs. A natural question is  to find out the relation
between the genus of a surface and that of a cluster quiver from
this surface.
 As an answer, we have the main conclusion in this paper as follows:
\begin{theorem}\label{thm1.1}
(i)~ For a triangulation $T$ of a surface $S$ with genus $g$, let $g'$ be
the genus of the cluster quiver $Q$ associated with $T$. Then,
$g'\leq g$.

(ii)~ Furthermore, for any non-negative integer $n$ and a closed
oriented surface $S_{n}$ of genus $n$, there exists a set of marked
points $M$ on $S_{n}$ and an ideal triangulation $P_{n}$ of $S_{n}$
such that the corresponding cluster quiver $T_{n}$ of $P_{n}$ has
genus $n$.
\end{theorem}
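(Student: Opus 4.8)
\medskip
\noindent\emph{Sketch of the argument.}

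\smallskip
\noindent\emph{Part (i).} The plan is to draw the underlying simple graph of $Q$ on the surface $S$ itself without crossings. Choose a point $x_a$ in the relative interior of each arc $a$ of $T$; these are the vertices of the drawing. Inside each ideal triangle $\Delta$ of $T$, whenever two of its sides $a,b$ are arcs of $T$ (rather than boundary segments), join $x_a$ to $x_b$ by a simple path lying in $\Delta$; the at most three paths drawn inside a single triangle may be taken pairwise disjoint except at their common endpoints. If $\Delta$ is a self-folded triangle with loop $\ell$ and radius $r$, the quiver also has the arrows that $r$ inherits from $\ell$; draw each of these by leaving the monogon bounded by $\ell$ (running parallel to $r$, then parallel to $\ell$) and entering the triangle on the other side of $\ell$, which can still be done disjointly from everything drawn so far. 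Since every arrow of $Q$ joins two arcs that are sides of a common triangle (or is such an inherited edge), this yields an embedding of the underlying graph of $Q$ into $S$, hence into the closed surface of genus $g$ obtained by capping the boundary components of $S$ with disks. Therefore $g'\le g$.

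\smallskip
\noindent\emph{Part (ii).} The case $n=0$ follows from (i), since any triangulation of the sphere has a planar quiver. For $n\ge 1$ the key external input is the additivity of the graph genus over blocks (Battle--Harary--Kodama--Youngs): the genus of a graph equals the sum of the genera of its blocks, so in particular the genus of a one-vertex amalgamation of two graphs is the sum of their genera. Realize $S_n$ as a connected sum of $n$ tori arranged in a row, with separating simple closed curves $\gamma_1,\dots,\gamma_{n-1}$ at the necks. Place a puncture $p_i$ on $\gamma_i$ and let $\ell_i$ be the loop based at $p_i$ that is isotopic to $\gamma_i$; since $\ell_i$ bounds a handle on each side it is a legitimate arc and is neither null-homotopic nor the loop of a self-folded triangle, and $\ell_1,\dots,\ell_{n-1}$ are pairwise disjoint separating arcs. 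Add enough further punctures on each torus summand and extend $\{\ell_1,\dots,\ell_{n-1}\}$ to an ideal triangulation $P_n$ of $S_n$ containing no self-folded triangle.

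\smallskip
\noindent Because each $\ell_i$ is a separating arc, cutting $S_n$ along it splits the triangles of $P_n$ into two families that share no arc other than $\ell_i$; hence each $\ell_i$ is a cut vertex of $T_n$. Let $R_j$ be the full subquiver of $T_n$ spanned by the arcs lying in the $j$-th region together with the two bounding loops $\ell_{j-1},\ell_j$. One checks that $T_n=\bigcup_{j=1}^n R_j$ with $R_j\cap R_{j+1}=\{\ell_j\}$ and $R_j\cap R_k=\emptyset$ for $|j-k|\ge 2$, so $T_n$ is an iterated one-vertex amalgamation of $R_1,\dots,R_n$ and $g(T_n)=\sum_{j=1}^n g(R_j)$. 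Applying the midpoint construction of (i) inside the $j$-th region, a genus-$1$ surface, gives $g(R_j)\le 1$. Hence, once each $R_j$ is known to be non-planar, $g(R_j)=1$ for all $j$ and $g(T_n)=n$, which together with (i) yields $g(T_n)=n=g(S_n)$.

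\smallskip
\noindent The heart of the matter is therefore to arrange each piece $R_j$ to be non-planar, that is, to contain a subdivision of $K_5$ or $K_{3,3}$. This requires genuine care: some triangulations of the torus with only two punctures have quiver equal to the octahedron $K_{2,2,2}$, which is planar, so one must use a building-block triangulation carrying several punctures and check non-planarity of its quiver directly. Once such a building block is in hand, the remaining points --- that the $\ell_i$ really do become cut vertices, that $P_n$ can be chosen with no self-folded triangle, and the amalgamation bookkeeping --- are routine.
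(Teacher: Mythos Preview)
Your argument for part (i) is essentially the same as the paper's: both embed the underlying graph of $Q$ on $S$ itself by placing one vertex per arc and routing the edges through the incident triangles. The paper phrases this via the puzzle-piece/block correspondence and treats the $4$-punctured-sphere triangulation separately, while you work directly with arc midpoints and handle self-folded triangles by the parallel-path trick; the content is the same.

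For part (ii) your strategy is genuinely different from the paper's. The paper produces a single explicit block-decomposable quiver $T_n$ that contains as a subgraph the graph $R_n$ of Gross--Tucker, known to have genus $n$; it then realises $T_n$ as the quiver of a triangulation of a $4n$-gon with edge identifications, checks the resulting surface has genus at most $n$, and combines this with part (i). No additivity theorem is used. Your route via Battle--Harary--Kodama--Youngs and separating loops at the necks is elegant and would give a cleaner inductive picture \emph{if} it can be completed.

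The gap is exactly the one you flag yourself: you never exhibit the building block. You need, for each interior summand, a triangulation of a torus with two boundary loops (and for the two end summands, with one boundary loop) whose quiver $R_j$ is non-planar; you only assert that this ``requires genuine care'' and move on. This is the entire content of the construction --- without it the argument proves nothing beyond $g(T_n)\le n$, which is just part (i). Note too that you cannot simply import the paper's Figure~16 example, since that lives on a \emph{closed} torus; you would have to manufacture an analogue on a torus with one or two distinguished boundary loops $\ell_{j-1},\ell_j$ already in the triangulation, and then verify that the resulting $R_j$ still contains a $K_5$ or $K_{3,3}$ subdivision. Until that is done, the proposal for (ii) is a plausible outline rather than a proof.
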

From this result, we know that the genus of a
surface is in fact an upper bound for the genuses  of cluster
quivers from the triangulations of this surface
and moreover any non-negative integer $n$ can be
reached as the genus of some cluster quiver from surface.

The paper is organized as follows. The requisite backgrounds on
cluster quiver and its mutation, triangulation of surface are
presented in $\S$ 2. In $\S$ 2.1, we give the basic definitions of
matrix mutation and quiver mutation. We mention the fact that skew-symmetric matrices
are in bijection with cluster quivers, also that
matrix mutation and quiver mutation are compatible.  In $\S$ 2.2, we
recall some basic definitions and properties of triangulation of
surface from \cite{fosth}. One can see how to obtain a cluster
quiver from a triangulation of surface and the compatibility between
mutation of quiver and flip of triangulation. A cluster quiver comes
from surface if and only if it is block-decomposable. As the end of
this subsection, we restate the classification of skew-symmetric
cluster algebras of finite mutation type.

$\S$ 3 mainly deals with the genuses of cluster quivers of finite
mutation type. In $\S$ 3.1, we give the table of genus distribution
of the $11$ exceptional quivers by utilizing Keller's quiver
mutation in Java \cite{k}. In $\S$ 3.2, we first prove Theorem
\ref{thm1.1}(i) which states that the genus of a surface is an upper
bound of genuses of cluster quivers obtained by triangulations of
this surface. From this result, one can easily see that genus is a
mutation invariant for cluster quivers from the surface of genus
$0$. As another application of this result, we give a sufficient
condition for two quivers  not to be mutation equivalent. The part
(ii) of Theorem \ref{thm1.1} is proved through constructing the
graph $R_{n}$, using topological graph theory for genus $n$ and the
classification theorem of compact surfaces in algebraic topology.

\section{Preliminaries}

\subsection{Cluster quiver and its mutation}
The notion of skew-symmetric matrix or equivalently of cluster
quiver is crucial in the theory of cluster algebras. In the
definition of cluster algebras, the most important ingredient is the
so-called {\em seed mutation}. For our purpose in this paper, we
only introduce {\em matrix mutation} (an important part of seed
mutation) so as to understand the motivation of
cluster quivers. For the details of the definitions of seed mutation
and cluster algebra, we refer to \cite{fz2}.

Suppose $B=(b_{ij})$ is an $n\times n$ integer matrix. For $1\leq
k\leq n$, a {\bf matrix mutation} $\mu_{k}$ at direction $k$
transforms $B$ into a new matrix $B^{'}=(b_{ij}^{'})$ where
$b_{ij}^{'}$ is defined by the equation
\begin{eqnarray*}b'_{ij}=\left\{\begin{array}{lll} -b_{ij},& if
~~i=k \ or j=k;\\b_{ij}+\frac{|b_{ik}|b_{kj}+b_{ik}|b_{kj}|}{2},&
otherwise.\end{array}\right.\end{eqnarray*}

Here, all matrices we consider are {\em skew-symmetric}. It is
easy to see that matrix mutation transforms a skew-symmetric matrix
into another one.

Given an $n\times n$ skew-symmetric matrix
$B=(b_{ij})$, we can construct a quiver $Q$ without loops and
$2$-cycles as follows: the vertex set is just the row/column indexes
$1,2, \cdots, n$ of the matrix $B$ and the number of arrows from $i$
to $j$ is defined to be $b_{ij}$ if $b_{ij}>0$.

\begin{definition}
A quiver without
loops and $2$-cycles is said to be a {\bf cluster quiver}.
\end{definition}
 There is a one-one correspondence between the set of skew-symmetric
matrices and that of cluster quivers. In fact, given a cluster
quiver $Q$ with $n$ vertices, one can construct a skew-symmetric
matrix $B=(b_{ij})$ defined by $b_{ij}=\sharp\{i\rightarrow
j\}-\sharp\{j\rightarrow i\}$ where $\sharp\{i\rightarrow j\}$
denotes the number of arrows from $i$ to $j$. According to this
one-one correspondence, {\em quiver mutation} can be deduced from
matrix mutation.

\begin{definition}
Suppose $Q$ is a cluster quiver with $n$ vertices where $Q_{0}=\{1,
2,\cdots, n\}$. For $k\in Q_{0}$, a {\bf quiver mutation} $\mu_{k}$
at vertex $k$ transforms $Q$ into $Q'$ where $Q'$ is obtained by the
following three steps:

(1)~ For every path $i\rightarrow k\rightarrow j$, add a new arrow
$i\rightarrow j$;

(2)~ Reverse all arrows incident with $k$;

(3)~ Delete all $2$-cycles.

\end{definition}

One can easily see that the resulting quiver
$Q'$ is also a cluster quiver. Matrix mutation and quiver mutation
are compatible in the following sense: given any $k\in {1, 2,\cdots,
n}$, $\mu_{k}(Q_B)=Q_{\mu_{k}(B)}$ and
$\mu_{k}(B_Q)=B_{\mu_{k}(Q)}$.

It is easy to verify that both matrix mutation and quiver
mutation are involutions, i.e., $\mu_{k}^{2}=1$. If
$Q'=\mu_{k_{1}}\mu_{k_{2}}\cdots \mu_{k_{l}}(Q)$ for some $k_{1},
k_{2}\cdots k_{l} \in \{1, 2, \cdots, n\}$, we will say $Q$ and $Q'$
are {\bf mutation equivalent}. Obviously it is an equivalence
relation on the set of isomorphism classes of cluster quivers with
$n$ vertices. A cluster quiver( resp. skew-symmetric cluster algebra
constructed from this quiver) is said to be of {\bf finite mutation
type} if the number of quivers in its mutation equivalent class is
finite. Cluster quivers of this type have been completely classified
in \cite{fst}. We will restate this classification theorem in $\S
2.2$.

\subsection{Cluster quivers from surface}
Given a surface $(S, M)$, the number of arcs in any triangulations
of $(S, M)$ is a constant. The following lemma gives the formula to
calculate the number of arcs in a triangulation.

\begin{lemma}\cite{fosth}~ For a triangulation of a surface, the
following formula holds:
\begin{equation}\label{relation1}
n= 6g+ 3b+ 3p+ c- 6
\end{equation}
 where
$n$ is the number of arcs; $g$ is the genus of the surface; $b$ is
the number of connected boundary components; $p$ is the number of
punctures; $c$ is the number of marked points on the boundary.
\end{lemma}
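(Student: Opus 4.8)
The plan is to derive \eqref{relation1} from a single Euler characteristic computation for the CW-decomposition of $S$ cut out by the triangulation $T$. Two classical inputs drive the argument. First, a compact oriented surface of genus $g$ with $b$ boundary components satisfies $\chi(S)=2-2g-b$; this is the only place the topological data $g$ and $b$ enter, and it is what turns the combinatorics of $T$ into the stated arithmetic identity. Second, an ideal triangulation endows $S$ with a CW-structure whose $0$-cells are the marked points, whose $1$-cells are the $n$ arcs together with the boundary segments, and whose $2$-cells are the triangles of $T$; since these cells cover all of $S$, the alternating sum of their counts equals $\chi(S)$.

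Next I would compute the three cell counts. There are $V=p+c$ vertices, splitting into the $p$ interior punctures and the $c$ boundary marked points. For the edges, the $c_i$ marked points on the $i$-th boundary component cut it into exactly $c_i$ boundary segments, so summing over the components gives $c$ boundary segments in all and hence $E=n+c$. To count the faces, write $t$ for the number of triangles and count incidences between triangles and their sides: every arc is a side of two triangles and every boundary segment is a side of exactly one, so $3t=2n+c$. Substituting $V=p+c$, $E=n+c$, and $F=t=(2n+c)/3$ into $\chi(S)=V-E+F=2-2g-b$ and clearing denominators gives $3p-n+c=6-6g-3b$, which rearranges at once to $n=6g+3b+3p+c-6$.

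I expect the delicate point to be the incidence identity $3t=2n+c$ in the presence of self-folded triangles, where an arc can be traversed twice by the boundary of a single triangle and the two triangles meeting along an arc need not be distinct. The remedy is to count triangle sides with multiplicity: each of the $n$ arcs still contributes $2$ to the side total and each of the $c$ boundary segments contributes $1$, so the identity persists verbatim in these degenerate configurations. I would also invoke the defining restrictions on arcs---no arc cuts out an unpunctured monogon or digon, and each boundary component carries at least one marked point---to guarantee that a genuine triangulation exists and that the CW-structure above is well defined; once this is in place, the remaining manipulation is exactly the routine linear computation displayed above.
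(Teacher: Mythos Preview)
Your argument is correct: the Euler characteristic computation with $V=p+c$, $E=n+c$, $3t=2n+c$, and $\chi(S)=2-2g-b$ is the standard derivation, and your handling of self-folded triangles via counting sides with multiplicity is the right fix for the one subtle point.

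Note, however, that the paper does not supply its own proof of this lemma at all: it is simply quoted from \cite{fosth} (Fomin--Shapiro--Thurston) as background. So there is no in-paper argument to compare against. Your proof is exactly the approach one finds in that reference (Proposition~2.10 there), so in that sense you have reproduced the intended proof rather than found an alternative.
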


The arcs of an ideal triangulation cut the surface $S$ into {\bf
ideal triangles}. The three sides of an ideal triangle do not have
to be distinct, i.e., we allow {\bf self-folded} triangle which can
be shown as the following figure:

\begin{figure}[h] \centering

  \includegraphics*[217,473][293,541]{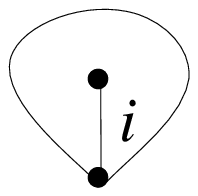}

 Figure 1
\end{figure}

Given an ideal triangulation $T$, there is an associated signed
adjacency matrix $B(T)$( see \cite{fosth} $\S 4$). Suppose the arcs
in $T$ are labeled by the numbers $1, 2, \cdots, n$ and let the rows and columns of $B(T)$ be
 numbered from $1$ to $n$. For an arc $i$, let $\pi_{T}(i)$ denote
the arc defined as follows: if there is a self-folded ideal triangle
in $T$ folded along $i$ (see Figure $1$), then $\pi_{T}(i)$ is its
remaining side; otherwise, we set $\pi_{T}(i)=i$.

For each non-self-folded triangle $\triangle$, define the $n\times
n$ integer matrix $B^{\triangle}=(b_{ij}^{\triangle})$ by setting

\begin{eqnarray*}b_{ij}^{\triangle}=\left\{\begin{array}{lll} 1,& \text{if} ~\triangle~ \text{has sides labeled}
~\pi_{T}(i)~ \text{and}~ \pi_{T}(j)~ \text{with}~ \pi_{T}(j)~ \text{following}~
\pi_{T}(i) ~\text{in the clockwise order;} \\ -1, & \text{if the
same holds, with the counter-clockwise order;}
\\0,& \text{ otherwise.}\end{array}\right.\end{eqnarray*}

The matrix $B=B(T)=(b_{ij})$ is defined by
\begin{equation}
B=\sum_{\triangle}B^{\triangle}
\end{equation}
where the sum is taken over all non-self-folded triangle $\triangle$. It is easy
to verify that $B(T)$ is skew-symmetric, and all its entries are
equal to $0$, $1$, $-1$, $2$ or $-2$.
Therefore, given a
triangulation $T$, we can firstly associate a skew-symmetric matrix
$B(T)$ to $T$ and then obtain a cluster quiver $Q$ corresponding to
$B(T)$, just as that given in \S 2.1.
 The corresponding cluster
quiver $Q_B$ of $B=B(T)$ is said to be {\bf coming from  surface}. Correspondingly,
the cluster algebra defined by $Q_B$ is also said to be {\em coming
from surface}.

A {\bf flip} is a transformation of an ideal triangulation $T$ into
a new triangulation $T^{'}$ obtained by replacing an arc $\gamma$
with a unique different arc $\gamma^{'}$ and leaving other arcs
unchanged. Flip of triangulation and mutation of matrix are
compatible in the sense of the following proposition.

\begin{proposition}(\cite{fosth} Proposition 4.8)
Suppose that the triangulation $\overline{T}$ is obtained from $T$
by a flip replacing an arc $k$. Then
$B(\overline{T})=\mu_{k}(B(T))$.

\end{proposition}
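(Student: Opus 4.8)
The plan is to exploit the fact that a flip is a \emph{local} operation: replacing the arc $k$ by the new arc $\overline{k}$ only affects the two ideal triangles of $T$ incident to $k$, while every triangle of $T$ disjoint from $k$ persists unchanged in $\overline{T}$. Since the matrix is defined as the sum $B(T)=\sum_{\triangle}B^{\triangle}$ over all non-self-folded triangles, the contributions of the triangles not meeting $k$ are identical in $B(T)$ and $B(\overline{T})$, and hence cancel when we form the difference. Thus it suffices to compare the total contribution of the (at most two) old triangles containing $k$ with that of the two new triangles containing $\overline{k}$, and to check that this difference is precisely the change produced by the matrix mutation $\mu_{k}$.

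To carry this out, first I would set up the generic local picture: the arc $k$ is the diagonal of a quadrilateral obtained by gluing the two incident triangles, with the four remaining sides labelled $a,b,c,d$ in cyclic order so that the triangles of $T$ are $(a,b,k)$ and $(c,d,k)$, while after the flip the triangles of $\overline{T}$ are $(b,c,\overline{k})$ and $(d,a,\overline{k})$. I would then read off the relevant entries of the two local matrices directly from the clockwise/counter-clockwise sign rule in the definition of $B^{\triangle}$. Comparing row/column $k$ before the flip with row/column $\overline{k}$ after it, one sees that every arrow incident to $k$ is reversed, which matches the rule $b'_{ij}=-b_{ij}$ when $i=k$ or $j=k$. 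For indices $i,j\in\{a,b,c,d\}$, I would verify that the new contribution differs from the old one exactly by the composition term $\tfrac{1}{2}(|b_{ik}|b_{kj}+b_{ik}|b_{kj}|)$, so that paths through $k$ create or cancel arrows in agreement with the mutation formula; the sign bookkeeping here is a short finite check, organized by the cyclic position of $i$ and $j$ around the quadrilateral.

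The main obstacle, and the reason $\pi_{T}$ appears in the definition of $B(T)$, is the treatment of \emph{self-folded} triangles. When $k$ (or one of its neighbours) is the inner loop or the remaining side of a self-folded triangle, the naive labelling above degenerates: some of $a,b,c,d$ may coincide, some may be boundary segments rather than arcs, and the value $\pi_{T}(i)$ may differ from $i$. I would handle these configurations as separate cases, in each case substituting $\pi_{T}$ for the relevant labels and re-running the same local comparison; the delicate point is checking that the convention $\pi_{T}(i)$ makes the contributions transform correctly when the self-folded structure itself is altered by the flip. Finally, since the difference of the two local matrices equals the change prescribed by $\mu_{k}$ in every case, and all entries outside the local region are untouched, one concludes $B(\overline{T})=\mu_{k}(B(T))$, completing the proof.
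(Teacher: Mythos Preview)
The paper does not give its own proof of this proposition: it is quoted verbatim from \cite{fosth} (their Proposition~4.8) and used as a black box, so there is nothing in the present paper to compare your argument against.

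That said, your outline is precisely the standard argument and is how Fomin--Shapiro--Thurston establish the result in \cite{fosth}: exploit locality to reduce to the quadrilateral formed by the two triangles meeting $k$, compute the change in the contributions $B^{\triangle}$ directly from the clockwise rule, and match the outcome against the mutation formula. Two small remarks on your case analysis: first, a flippable arc $k$ is never the folded (inner) edge of a self-folded triangle, so that subcase for $k$ itself does not occur; what does occur is that one of the neighbouring sides $a,b,c,d$ may be such a fold, and this is exactly where $\pi_{T}$ enters. Second, degeneracies beyond self-folds can arise---two of $a,b,c,d$ may coincide even without any self-folded triangle (for instance on a once-punctured torus), or some may be boundary segments---so the ``short finite check'' is a genuine, if routine, enumeration rather than a single generic computation. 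With those caveats your plan is correct.
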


According to the Remark 4.2 in \cite{fosth}, all triangulations that
we are interested in can be obtained by gluing together a number of
puzzle pieces except for one case, i.e. the triangulation of 4-punctured
sphere obtained by gluing three self-folded triangles to respective
sides of an ordinary triangle, see Figure 2(I). There are three types of puzzle pieces as follows, see
Figure 2(II).

\begin{figure}[h] \centering

  \includegraphics*[229,393][315,465]{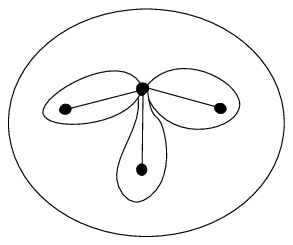}

Figure 2(I)
\end{figure}

\begin{figure}[h] \centering

  \includegraphics*[124,475][432,545]{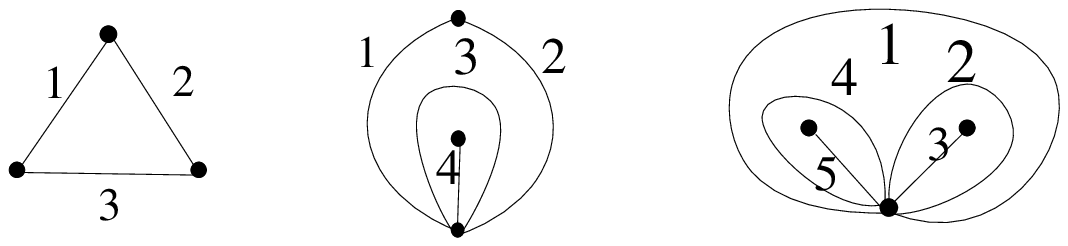}

Figure 2(II)
\end{figure}
These three types of puzzle pieces correspond to blocks of type I-V
(see Figure 3) depending on whether the outer sides are
lying on the boundary (for the details, see the proof of Theorem
13.3 in \cite{fosth}).
\begin{figure}[h] \centering

  \includegraphics*[59,422][344,481]{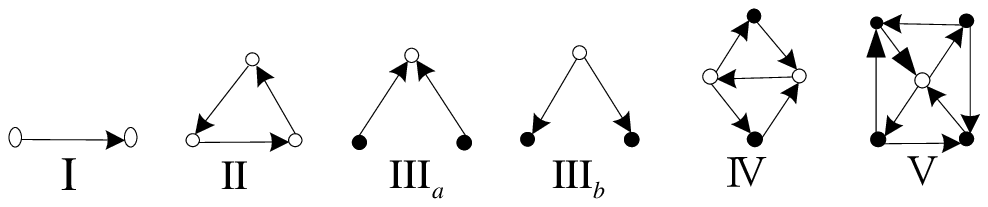}

 Figure 3
\end{figure}
The vertices marked by unfilled circles in Figure 3 are called {\bf
outlets}.
\begin{definition}(\cite{fosth})
A quiver is said to be {\bf block-decomposable} if it can be
obtained from a collection of disjoint blocks by the following
procedure:

(1) take a partial matching of the combined set of outlets (matching
an outlet to itself or to another outlet from the same block is not
allowed);

(2) glue the outlets in each pair of matching;

(3) remove all 2-cycles.

\end{definition}

According to Theorem 13.3 in \cite{fosth}, a cluster quiver is coming from
surface if and only if it is block-decomposable.

The following theorem gives a complete classification of
skew-symmetric cluster algebras of finite mutation type.

\begin{lemma}\label{lem2.1}\cite{fst}.
A skew-symmetric cluster algebra $\mathcal{A}$ of rank $n$ is of
finite mutation type if and only if:

(1) $\mathcal{A}$ is coming from surface ($n\geq {3}$);\\
or~ (2) $n\leq {2}$;\\
or~ (3) $\mathcal{A}$ is one of the $11$ exceptional types (see
Figure 4): $$E_{6},~ E_{7},~ E_{8},~ E^{(1)}_{6},~ E^{(1)}_{7},~
E^{(1)}_{8},~ E^{(1,1)}_{6},~ E^{(1,1)}_{7},~ E^{(1,1)}_{8},~
X_{6},~ X_{7}$$ that is, $\mathcal{A}$ has a cluster quiver at one
of the above types.
\begin{figure}[h] \centering

  \includegraphics*[115,474][428,725]{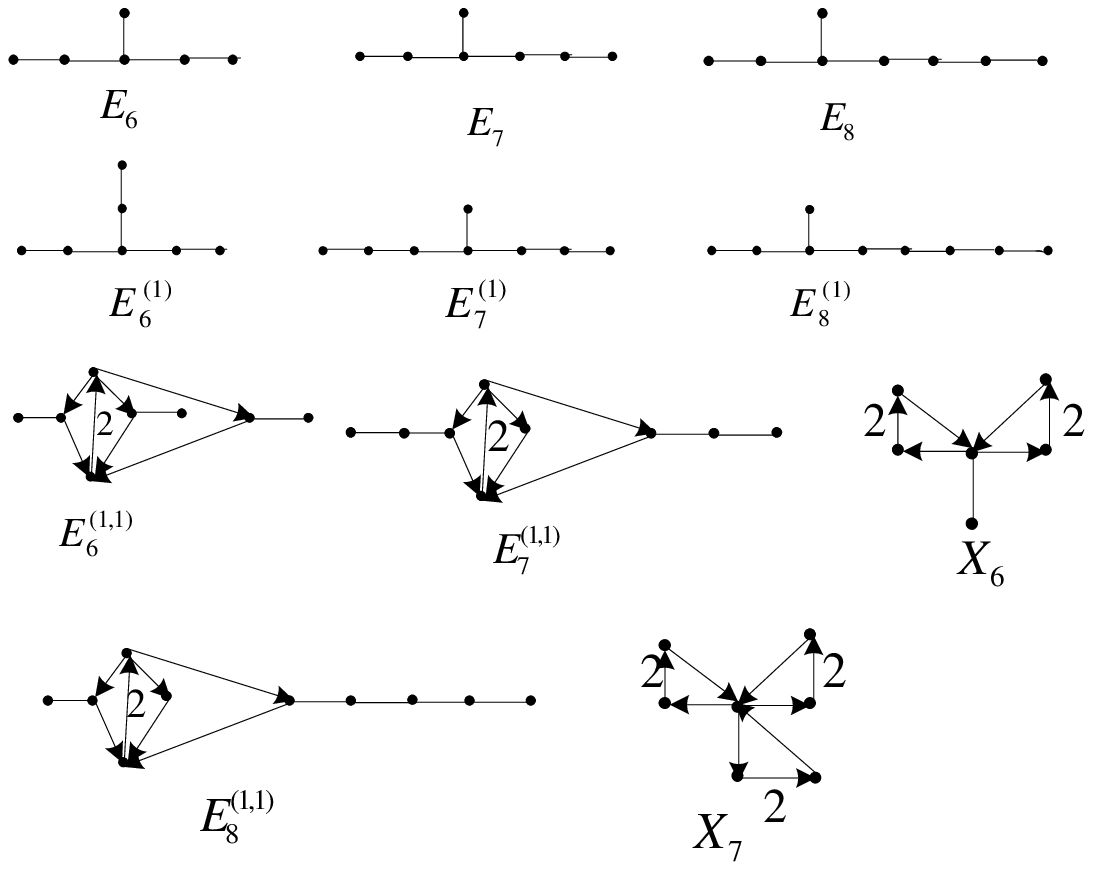}

Figure 4
\end{figure}

\end{lemma}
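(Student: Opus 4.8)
The statement is the classification theorem of Felikson--Shapiro--Tumarkin \cite{fst}, so I would organize a proof around its two implications, leaning on the surface machinery of \cite{fosth} recalled above. The plan is to prove sufficiency by a direct finiteness argument in each of the three cases, and necessity by an intrinsic characterization of mutation-finiteness that ultimately routes through the block/surface dictionary.

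For sufficiency I would treat the three alternatives separately. If $\mathcal{A}$ comes from a surface, then by the flip--mutation compatibility (Proposition 4.8 of \cite{fosth}, quoted above) every matrix in the mutation class of $B(T)$ is again the signed adjacency matrix $B(T')$ of some triangulation $T'$ of the same surface; one argues by induction on the length of the mutation sequence, using tagged triangulations to flip the arcs enclosed in self-folded triangles. Since every such $B(T')$ is an $n\times n$ skew-symmetric integer matrix with entries in $\{0,\pm1,\pm2\}$, the whole class lies in the finite set of skew-symmetric integer matrices of fixed size with entries bounded by $2$, hence is finite. For $n\leq 2$ a skew-symmetric matrix has the form $\left(\begin{smallmatrix} 0 & b \\ -b & 0 \end{smallmatrix}\right)$, and $\mu_1,\mu_2$ merely negate it, so the class has at most two elements. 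For the $11$ exceptional types, finiteness of the mutation class is a finite verification carried out directly (or with Keller's software \cite{k}, as in $\S 3.1$).

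The necessity direction is the substantial one. I would first record that finite mutation type is hereditary: if $Q$ is of finite mutation type, then so is every full subquiver on a vertex subset, because mutation at a vertex of that subset commutes with restriction to the full subquiver on it, so any mutation sequence supported on the subset lifts to the ambient quiver. This reduces the problem to ruling out a list of ``minimal'' mutation-infinite quivers. Two inputs then drive the classification. First, for rank $\geq 3$ one shows a mutation-finite quiver has all arrow multiplicities $\leq 2$: a multiplicity $\geq 3$ arrow together with any third vertex generates, under mutation, unboundedly growing multiplicities, contradicting finiteness. Second, one classifies the minimal mutation-infinite quivers of small rank and proves that a quiver is mutation-finite if and only if it contains none of them as a full subquiver. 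Granting this, any mutation-finite quiver of rank $\geq 3$ not mutation-equivalent to one of the $11$ exceptional types can be assembled from the block types I--V of Figure 3, i.e. it is block-decomposable; by Theorem 13.3 of \cite{fosth} (restated above) block-decomposable is equivalent to coming from a surface, which is exactly alternative (1).

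The main obstacle is this last combinatorial step in the necessity direction: controlling how the local configuration around each vertex forces the global block structure, and above all bounding the rank beyond which no new mutation-finite quiver outside the surface family can appear, so that the exceptional list is provably complete and finite. This is precisely where the rank-$3$ analysis --- isolating the Markov quiver with all multiplicities $2$, corresponding to the once-punctured torus --- and the careful enumeration of minimal mutation-infinite quivers in \cite{fst} are indispensable; the surface/block dictionary of \cite{fosth} then converts the surviving block-decomposable quivers into triangulated surfaces, completing the ``only if'' implication.
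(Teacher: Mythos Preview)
The paper does not prove this lemma at all: it is stated as a citation from \cite{fst} and used as background for the classification of finite mutation type, with no argument supplied. So there is no ``paper's own proof'' to compare your proposal against.

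That said, your sketch is a reasonable high-level outline of the Felikson--Shapiro--Tumarkin argument. The sufficiency direction is accurate and essentially complete as written. For necessity, your description of the strategy (bounding arrow multiplicities, hereditary behavior under full subquivers, minimal mutation-infinite quivers, and then invoking the block/surface dictionary of \cite{fosth}) matches the architecture of \cite{fst}, and you are right to flag the combinatorial enumeration step as the real work. One point to tighten: the actual proof in \cite{fst} does not proceed purely by listing minimal mutation-infinite quivers and checking their absence; rather it uses an inductive argument on rank together with a computer-assisted analysis to show that, beyond a certain rank, every mutation-finite non-decomposable quiver is block-decomposable, with the exceptional list emerging from the low-rank base cases. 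Your phrasing ``a quiver is mutation-finite if and only if it contains none of them as a full subquiver'' overstates what is established directly and conflates the method with the minimal-infinite-quiver approach developed later. But since the present paper only quotes the result, none of this affects anything here.
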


\section{Distribution of genuses of cluster quivers of finite mutation type}

\subsection{Genuses of exceptional cluster quivers}

In this section, we give the table of distribution of genuses of
the $11$ exceptional cluster quivers in the classification of finite mutation
type. Our main tool is Keller's quiver mutation in java(\cite{k}). To
obtain the following table, one should note the following facts:

(1)~ $E_{6},~ E_{7},~ E_{8},~ E^{(1)}_{6},~ E^{(1)}_{7}$ and
$E^{(1)}_{8}$ are trees. According to Lemma 1.1 in \cite{V}, any
orientations on the same tree are mutation equivalent.

(2)~ $E_{6},~ E_{7},~ E_{8}$ and $E^{(1)}_{8}$ are full subgraphs of
the underlying graph of  $E^{(1,1)}_{8}$; $E^{(1)}_{6}$ is a full
subgraph of the underlying graph of $E^{(1,1)}_{6}$; $E^{(1)}_{7}$
is a full subgraph of the underlying graph of $E^{(1,1)}_{7}$. Since
any quiver mutation equivalent to a full subquiver of $Q$ must be a
full subquiver of some $Q^{'}$ which is mutation equivalent to $Q$,
we first test the mutation classes of
$E^{(1,1)}_{6}$, $E^{(1,1)}_{7}$ and $E^{(1,1)}_{8}$ in order to see
their genus distribution.

(3)~ To see the genus of a quiver, we only need to see its
underlying graph. Hence when doing the quiver mutation in java
 due to \cite k, we can choose the mutation class under graph
isomorphism. This can cut down the number of quivers in mutation
class greatly which we have to consider.

(4) We check all the quivers in the mutation classes of
$E^{(1,1)}_{6}$, $E^{(1,1)}_{7}$ and $E^{(1,1)}_{8}$ and find that all of them are planar. So are the other exceptional cluster quivers of $E$ type.
\\

In the following distribution table, the {\em total number} means the
number of quivers in the mutation class up to quiver isomorphism,
and the {\em number of genus} $0$ (resp. $1$) means the number of quivers (up to
quiver isomorphism) in the mutation class whose genus is $0$ (resp. $1$).

\begin{tabular}{|c|c|c|c|}
  \hline
  $11$ exceptional cluster quivers & total number & the number of genus 0 & the number of genus 1 \\
\hline
  $E_{6}$ & $21$ & $21$ & $0$ \\
\hline
  $E_{7}$ & $112$ & $112$ & $0$ \\
\hline
  $E_{8}$ & $391$ & $391$ & $0$ \\
\hline
  $E_{6}^{(1)}$ & $52$ & $52$ & $0$ \\
\hline
  $E_{7}^{(1)}$ & $338$ & $338$ & $0$ \\
\hline
  $E_{8}^{(1)}$ & $1935$ & $1935$ & $0$ \\
\hline
  $E_{6}^{(1,1)}$ & $27$ & $27$ & $0$ \\
\hline
  $E_{7}^{(1,1)}$ & $217$ & $217$ & $0$ \\
\hline
  $E_{8}^{(1,1)}$ & $1886$ & $1886$ & $0$ \\
\hline
  $X_{6}$ & $4$ & $1$ & $3$ \\
\hline
  $X_{7}$ & $2$ & $1$ & $1$ \\
  \hline
\end{tabular}
\\
\\
 From the above table, one can easily see that the
genus of the quiver of type $E$ is invariant under quiver mutation; but the genus of the quiver of type $X$ will vary under
quiver mutation.

\begin{proposition}
There are exactly $4$ non-planar cluster quivers of exceptional finite mutation types, with genuses $1$, which are listed as follows,
\begin{figure}[h] \centering

  \includegraphics*[83,431][486,543]{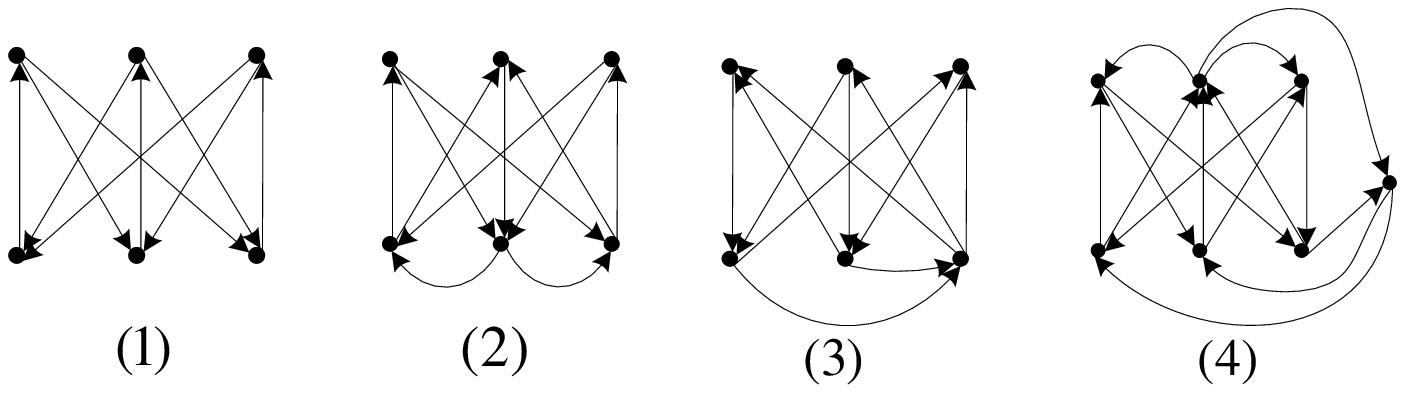}

Figure 5
\end{figure}
where the quivers (1), (2), (3) are in the mutation-equivalent class of $X_{6}$, the
quiver (4) is in the
mutation-equivalent class of $X_{7}$.
\end{proposition}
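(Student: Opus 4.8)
The plan is to settle this proposition purely by finite computation, using Keller's Java applet for quiver mutation \cite{k}, since the two mutation classes $X_6$ and $X_7$ are tiny (of sizes $4$ and $2$ respectively, as recorded in the distribution table). First I would enumerate, up to quiver isomorphism, all $4$ quivers in the mutation class of $X_6$ and all $2$ quivers in the mutation class of $X_7$; this is immediate from running the applet starting from the seed quivers displayed in Figure 4 and selecting the ``mutation class up to isomorphism'' option. For each of these $6$ quivers I then pass to its simple underlying graph (forgetting orientation and multiplicities of arrows), since by the discussion in $\S 2$ the genus of a quiver depends only on this graph.

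Next I would determine the genus of each of the $6$ underlying graphs. For the planar ones — one representative from $X_6$ and one from $X_7$ — it suffices to exhibit a plane drawing, which I would include implicitly by noting that these are the drawings appearing in Figure 4 (or can be redrawn without crossings by inspection, having few vertices and edges). For the remaining $4$ graphs I must show each has genus exactly $1$: the upper bound $g\le 1$ is obtained by exhibiting a crossing-free embedding in the torus (equivalently, a rotation system / combinatorial map on the torus via the Euler-formula count $V-E+F=0$), and these embeddings are what is recorded in Figure 5; the lower bound $g\ge 1$, i.e. non-planarity, follows from locating a subdivision of $K_5$ or $K_{3,3}$ inside each graph and invoking Kuratowski's theorem. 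Since all graphs in question have at most $7$ vertices, finding such a Kuratowski subgraph is a short finite check.

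Finally I would assemble these facts: exactly $3$ of the $4$ graphs coming from $X_6$ and exactly $1$ of the $2$ coming from $X_7$ are non-planar, each of genus $1$, matching the entries $(4,1,3)$ and $(2,1,1)$ in the table, and these are precisely the quivers (1)--(4) of Figure 5. The main obstacle is not conceptual but verificational: one must be careful that the enumeration of the mutation classes is complete (hence the reliance on \cite{k} and on the already-cited classification Lemma~\ref{lem2.1} guaranteeing finiteness), and that for each of the four non-planar graphs both the torus embedding in Figure 5 is genuinely crossing-free and the exhibited $K_5$- or $K_{3,3}$-subdivision is genuinely present; these are routine but must be done graph-by-graph. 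No surface-theoretic input beyond Kuratowski's theorem and the genus-$1$ Euler formula is needed, so the argument is elementary once the (small) computation is carried out.
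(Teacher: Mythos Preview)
Your proposal is correct and follows essentially the same computational strategy as the paper --- enumerate the (tiny) mutation classes of $X_6$ and $X_7$ via Keller's applet and inspect the underlying graphs --- but you are considerably more thorough about the genus verification than the paper itself. The paper's proof of this proposition does not actually argue non-planarity or exhibit torus embeddings at all; it takes the genus counts as already established by the distribution table and simply records, for each of the four quivers in Figure~5, an explicit short mutation sequence from $X_6$ or $X_7$ that produces it (e.g., quiver~(2) is $\mu_{x_4}(X_6)$, quiver~(4) is $\mu_{y_4}(X_7)$, etc.). Your plan to certify $g\ge 1$ via Kuratowski subgraphs and $g\le 1$ via an explicit torus embedding is a genuine addition that makes the argument self-contained rather than dependent on the applet's output, at the cost of four small case checks the paper omits.
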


\begin{proof}
For the convenience of describing the
mutations at $X_{6}$ and $X_{7}$ to obtain these four quivers, we
will label the vertices of $X_{6}$ and $X_{7}$ as Figure 6.
\begin{figure}[h] \centering

  \includegraphics*[171,392][362,475]{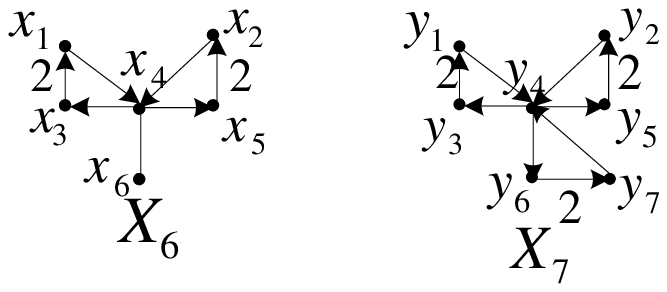}

Figure 6
\end{figure}
Then, we can get the four quivers in Figure 5 as follows:

the quiver (1) can be obtained from $X_6$ by mutation on the vertices $x_{4}$ and $x_{6}$;

the quiver (2) can be obtained from $X_6$ by mutation on the vertex
$x_{4}$;

the quiver (3) can be obtained from $X_6$ by mutation on the vertices $x_{4}$ and $x_{3}$;

the quiver (4) can be obtained from $X_7$ by mutation on the vertex
$y_{4}$.
\end{proof}

\subsection{Proof of the main conclusion}

We will begin by proving the first part of the
theorem, i.e., the genuses of cluster quivers obtained from the
triangulations of a surface are not greater than that of the
surface.

{\em Proof of Theorem \ref{thm1.1}}(i).~ By the correspondence of
puzzle pieces and blocks, each puzzle piece corresponds to a block
of types I-V. For each puzzle piece, we put its corresponding block
into the face bounded by it. If two puzzle pieces have a common
edge, then we glue two vertices corresponding to the common edge
between these two blocks. Hence we obtain the quiver $Q$ of T in
this way and moreover the underlying graph of $Q$ can be drawn
without self-crossing on the surface $S$. We then have $g'\leq g$ by
the definition of genus of quiver.

To complete the proof of the theorem, we should consider the only
exceptional case the triangulation of
which cannot be obtained by gluing the puzzle pieces. Let T be the
triangulation of 4-punctured sphere obtained by gluing three
self-folded triangles to respective sides of an ordinary triangle.
The corresponding cluster quiver of $T$ can be obtained by gluing
four blocks of type II, which can be shown as the following Figure
7. In this figure, for $i=1, 2$ and $3$, $i$ and $i'$ denote the
corresponding vertices of two arcs in the same self-folded
triangles. Obviously it is a planar quiver and hence in this case
$g'=g=0$.

\begin{figure}[h] \centering

  \includegraphics*[202,444][289,532]{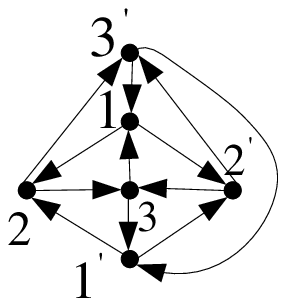}

Figure 7
 \end{figure}
 This completes the proof.
$\ \ \ \ \ \ \ \ \ \ \ \ \ \ \ \ \ \ \ \ \ \ \ \ \ \ \ \ \ \ \ \ \ \ \ \ \ \ \ \ \ \ \ \ \ \ \ \ \ \ \ \ \ \ \ \ \ \ \ \ \ \ \ \ \ \ \ \ \ \ \ \ \ \ \ \ \ \ \ \ \ \ \ \ \ \ \ \ \ \ \ \ \ \ \ \ \ \ \ \ \ \ \ \ \ \ \ \ \ \ \ \ \ \ \ \ \ \ \ \ \ \ \ \ \ \ \ \ \ \ \ \ \ \  \Box$

\vspace{5mm}
In order to prove Theorem \ref{thm1.1}(ii), we need
some preliminaries from \cite{GT} and \cite{M}.

Let us firstly introduce a class of graphs with arbitrary large
genus. For each positive integer $n$, the graph $R_{n}$ is
constructed as follows (see \cite{GT} Example 3.4.2):

There are $n+1$ concentric cycles which consists of $4n$ edges.
Additionally, there are $4n^{2}$ inner edges connecting the $n+1$
cycles to each other and $2n$ outer edges adjoining antipodal
vertices on the outermost cycle.

The following Figure 8 gives the example $R_{2}$ for such graph in
the case $n=2$.
\begin{figure}[h] \centering

  \includegraphics*[167, 409][302, 545]{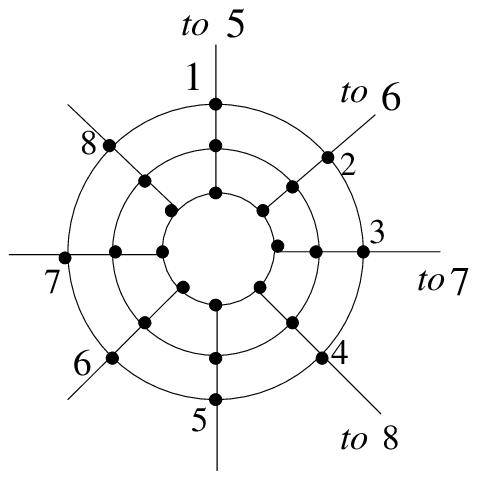}

Figure 8
\end{figure}

It was shown in \cite{GT} that $R_{n}$ is of genus $n$.

Secondly, we recall some basic notions and facts in \cite{M}.

The classification theorem of compact(or closed) surfaces (see
Theorem 5.1 of \cite{M} Chapter 1) asserts that any compact surface
could be either homeomorphic to a sphere, or to a connected sum of
tori, or to a connected sum of projective planes. The compact
surfaces can be considered as the quotient space of a polygon with
the directed edges identified in pairs. There is a convenient way to
indicate which paired edges are to be identified in such a polygon.
We give a letter (for example $a, b, c, \cdots$) to each paired
edges such that different pairs receive different letters. Starting
at a definite vertex, we traverse the boundary of the polygon in
either a clockwise or counter clockwise fashion. If the arrow on an
edge points in the same traversing direction, then we will put no
exponent (or the exponent $+1$) on the letter for that edge;
otherwise, we will write the letter for that edge with the exponent
$-1$. For example, the identifications of the following hexagon in
Figure 9 can be indicated by the symbols as
$a_{1}a_{1}a_{2}a_{2}^{-1}a_{3}a_{3}^{-1}$.

\begin{figure}[h] \centering

  \includegraphics*[214, 432][309, 532]{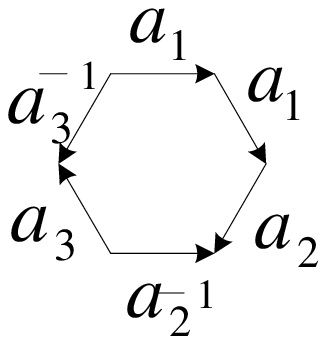}

Figure 9
\end{figure}

The symbols corresponding to the surfaces mentioned in the
classification theorem are as follows (see \cite{M} $\S 5$):

(1) The sphere: $aa^{-1}$.

(2) The connected sum of $n$ tori:
$$a_{1}b_{1}a_{1}^{-1}b_{1}^{-1}a_{2}b_{2}a_{2}^{-1}b_{2}^{-1}\cdots
a_{n}b_{n}a_{n}^{-1}b_{n}^{-1}.$$

(3) The connected sum of $n$ projective planes:
$$a_{1}a_{1}a_{2}a_{2}\cdots a_{n}a_{n}.$$

Given a polygon, if the letter designating a certain pair of edges
occurs with both exponents $+1$ and $-1$ in the symbol, then this
pair of edges is said to be the {\bf first kind}; otherwise the pair
is said to be the {\bf second kind} (\cite{M}). From the proof of
Theorem 5.1 in \cite{M}, we know that if all the pairs of edges are
of the first kind, then the resulting surface is oriented; if there
exists a second kind pair, then the resulting surface is
non-oriented. Moreover, since the first kind pair of adjacent edges
can be eliminated, the resulting surface of a $4n-$gon with pairs
all of the first kind is an oriented surface with genus at most $n$.

For the preparation of the proof of Theorem \ref{thm1.1}(ii), we
first prove the following lemma.
\begin{lemma}\label{2}
For an arbitrary non-negative integer $n$, there always exists a
block-decomposable cluster quiver  $T_n$ such that the genus
$g(T_n)$ of $T_n$ satisfies that $g(T_n)\geq n$.
\end{lemma}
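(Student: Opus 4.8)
The plan is to realize the graph $R_n$ from \cite{GT}---which has genus exactly $n$---as (a subgraph of) the underlying graph of a block-decomposable cluster quiver, so that Lemma \ref{2} follows immediately from monotonicity of genus under taking subgraphs. The key observation is that $R_n$ is built from a small number of simple local patterns (concentric $4n$-cycles joined by ``inner'' rungs, plus $2n$ ``outer'' antipodal edges), and each such local pattern should be matched against one of the blocks of type I--V in Figure 3. Concretely, I would fix an explicit ideal triangulation of a suitable punctured surface whose signed-adjacency quiver $B(T)$, read off via the block decomposition, contains $R_n$ as a full subgraph; then $g(T_n)\ge g(R_n)=n$ by the fact (used already in $\S 3.1$) that the genus of a quiver only sees its simple underlying graph and that a full subgraph cannot have larger genus than the ambient graph.

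First I would recall the structure of $R_n$ in enough detail to see it as a gluing diagram: label the vertices on cycle $j$ (for $j=0,1,\dots,n$) cyclically, note that the inner edges form a perfect matching pattern between consecutive cycles contributing $4n$ edges per ``layer'' (so $4n\cdot n=4n^2$ inner edges total), and that the $2n$ outer edges pair up antipodal vertices of the outermost cycle. Next I would build the quiver layer by layer: take $4n$ copies of an appropriate block (a type III or type IV puzzle piece gives a triangle of vertices with the right adjacencies to serve as a ``rung plus cycle-edge'' gadget), arrange them in a ring to produce one annular layer carrying a $4n$-cycle with the prescribed rungs to the next layer, and stack $n$ such rings. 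Then I would add the outer edges by gluing outlets of the outermost ring in antipodal pairs via the matching in the definition of block-decomposability. After Step (3) of that definition (removing $2$-cycles) one checks the underlying simple graph of the resulting quiver $T_n$ contains $R_n$; since $T_n$ is block-decomposable it comes from a surface, and by Theorem \ref{thm1.1}(i) it in fact comes from a triangulation of some surface, which is all Lemma \ref{2} needs.

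The main obstacle is the bookkeeping in Step 2: I must choose the blocks and the matching of outlets so that (a) every edge of $R_n$ appears in the underlying graph, (b) the extra arrows introduced by the gluing and by the block interiors do not accidentally cancel a needed edge when $2$-cycles are deleted, and (c) the whole construction stays genuinely block-decomposable (no outlet matched to itself or to an outlet in the same block). A clean way to handle (a) and (b) is to use blocks all of the same type and to keep the matching ``planar within each layer'' so that the only nonplanarity forced into $T_n$ is exactly the nonplanarity of $R_n$; then one can invoke the exact genus computation for $R_n$ from \cite{GT} verbatim rather than recomputing it. I expect the verification that no necessary edge is destroyed by $2$-cycle removal to be the delicate point, and I would record the relevant blocks and the antipodal matching explicitly (with a figure, paralleling Figures 7 and 8) so that the reader can check it directly; the genus inequality $g(T_n)\ge n$ then drops out, and the matching upper bound $g(T_n)\le n$ --- needed for the full Theorem \ref{thm1.1}(ii) --- will come afterward from part (i) together with the genus-$n$ surface supporting the triangulation.
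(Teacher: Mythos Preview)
Your overall strategy---embed the graph $R_n$ of \cite{GT} as a subgraph of the underlying graph of a block-decomposable quiver and conclude $g(T_n)\ge g(R_n)=n$---is exactly the paper's approach. What differs is the construction itself, and here the paper's method is both simpler and automatically dissolves the obstacle you flag in (b).

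Rather than building $T_n$ cycle by cycle with ``rung plus cycle-edge'' gadgets, the paper works face by face. Observe that the $4n^2+2n$ rectangular faces of $R_n$ (the $4n$ rectangles between each pair of adjacent cycles, plus the $2n$ rectangles between the outermost cycle and itself created by the antipodal edges) admit a checkerboard $2$-coloring: pick one rectangle, then repeatedly pass to the four rectangles that share a vertex but not an edge with it. This yields a set $\mathcal{S}$ of $2n^2+n$ mutually \emph{distant} rectangles (no two share an edge). Each rectangle in $\mathcal{S}$ is then realized as the underlying graph of a little quiver obtained by gluing four type~II blocks around a central vertex (a square with both diagonals meeting at a new interior vertex). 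Because rectangles in $\mathcal{S}$ are pairwise edge-disjoint, gluing these gadgets only along vertices never creates a $2$-cycle, so your worry (b) evaporates without any case analysis. After this step every edge of $R_n$ is present except $2n$ edges on the innermost cycle not bounding any rectangle of $\mathcal{S}$; each of these is supplied by a single block of type~IV. The count is $8n^2+4n$ blocks of type~II and $2n$ blocks of type~IV, and $R_n$ is visibly a full subgraph of the result.

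Your layer-by-layer plan with type III/IV gadgets might be made to work, but as written it is underspecified: you have not said which block realizes a ``rung plus cycle-edge,'' how consecutive layers are glued, or how the antipodal identifications on the outermost ring are performed without creating $2$-cycles. The paper's face-coloring trick is the missing idea that turns the bookkeeping you anticipate into a one-line observation.
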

\begin{proof}
Given a graph $R_{n}$ as above, label the $n+1$ cycles from
innermost to outermost by $1$ to $n+1$. For each $i\in \{1, 2,\cdots
, n\}$, there are $4n$ rectangles  between the $ith$ cycle and the
$(i+1)th$ cycle. For the outermost cycle, there exist $2n$
rectangles between the $(n+1)th$ cycle and itself. Two rectangles
are said to be {\bf neighbored} if they share a common edge;
otherwise, they are said to be {\bf distant}. It is easy to observe
that there are $4n^{2}+2n$ rectangles in $R_{n}$. Given any
rectangle $A$ in $R_{n}$, we first choose four rectangles distant to
$A$ but having a common vertex with $A$; for these four rectangles,
we do the same thing as previous step; continuing this process, we
will obtain a maximal set of mutually distant rectangles. This is
denoted by $\mathcal S$. This set contains $2n^{2}+n$ rectangles.
The other $2n^{2}+n$ rectangles form another maximal set of mutually
distant rectangles. This is denoted by $\mathcal T$. Trivially,
these two sets $\mathcal S$ and $\mathcal T$ are independent of the
choice of the original rectangle $A$. Consider the set $\mathcal S$,
each rectangle in $\mathcal S$ can be obtained by gluing four blocks
of type II as the following Figure 10:

\begin{figure}[h] \centering

  \includegraphics*[236, 430][302, 475]{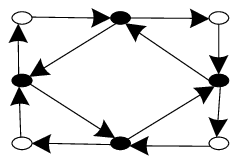}

 Figure 10
\end{figure}

For the innermost cycle, there are $2n$ edges which do not lie in
any rectangles of $\mathcal S$. We can then use one block of type IV
to substitute each such edge. For all these $2n$ edges, we need $2n$
blocks of type IV.

In summary, we obtain a quiver $T_n$ by gluing $8n^{2}+4n$ blocks of
type II and $2n$ blocks of type IV. According to the construction of
$T_n$, obviously, $R_{n}$ is a subgraph of the underlying graph of
$T_n$. Therefore, the genus $g(T_n)\geq g(R_n)=n$. The quiver of the
following Figure 11 is the example of $T_n$ in the case $n=2$, where
the vertices labeled by the same numbers should be glued together.
\end{proof}

\begin{figure}[h] \centering

  \includegraphics*[171, 465][394, 662]{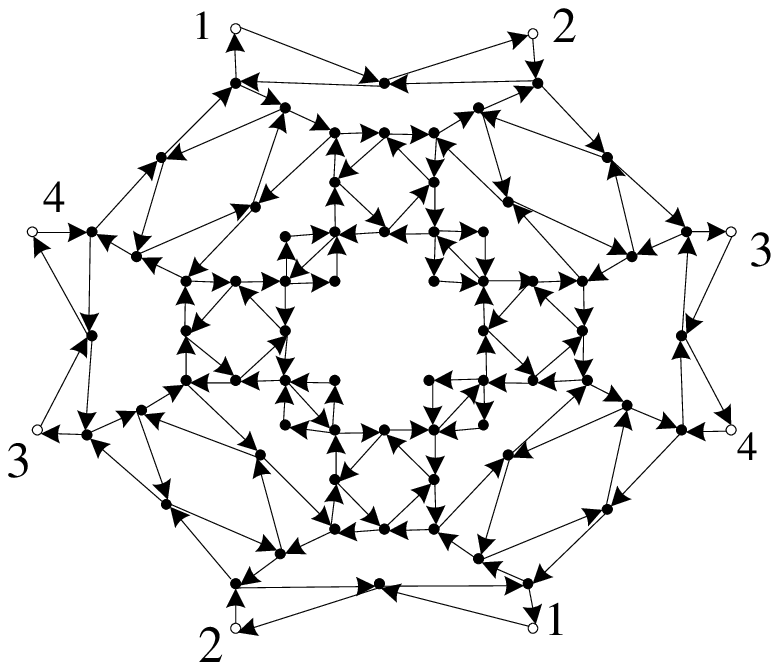}

 Figure 11
\end{figure}

Now, we can give the proof of part (ii) of Theorem \ref{thm1.1}. The
proof will be based on the fact that the quiver $T_{n}$ given in the
proof of Lemma \ref{2} can be obtained from a closed surface of
genus $n$.

{\em Proof of Theorem \ref{thm1.1}}(ii).~ By Lemma \ref{2},
$g(T_{n})\geq n$. It is easy to check that $T_{n}$ is a uniquely
block-decomposable quiver and hence $T_{n}$ can be uniquely encoded
by its corresponding triangulation, that is, blocks of type II are
encoded by puzzle pieces of first type (see the left graph in Figure
2(II)) and blocks of type IV are encoded by puzzle pieces of second
type (see the middle graph in Figure 2(II)). In order to draw
$T_{_{n}}$, we firstly draw a planar quiver $T_{n}'$ which has $4n$
unglued outlets. After gluing these $4n$ outlets in pairs, one
obtains $T_{n}$, where each pair consists of one outlet and its
opposite one. As example in the case $n=2$,  see Figure 11. Now, we
will construct a closed surface $S_{n}$ of genus $n$ and a
triangulation $P_{n}$ of $S_{n}$ such that its corresponding cluster
quiver is $T_{n}$.

We will chase $T_{n}$ from innermost to outermost. Blocks of type II
and type IV are encoded by puzzle pieces of first type and second
type respectively. For the outermost $4n$ oriented triangles in
$T_{n}^{'}$, we let each of them corresponds to a puzzle piece of
first type. Thus we can first obtain a $4n$-gon with triangulation.
Denote this $4n$-gon with triangulation as $S_{n}^{'}$. Then we can
obtain a closed oriented surface $S_{n}$ by identifying the edges of
$S_{n}^{'}$ in pairs and gluing all outermost vertices into one, and
then obtain a triangulation $P_{n}$ of $S_{n}$ such that its
corresponding quiver is exactly $T_{n}$. For the case $T_{2}$, its
corresponding $S_{2}^{'}$ can be shown as the following Figure 12.
To obtain $S_{2}$ and $P_{2}$, one only need to glue the edges
labeled by the same number in pairs and to glue all outermost 8
vertices into one.
\begin{figure}[h] \centering

  \includegraphics*[201, 221][389, 422]{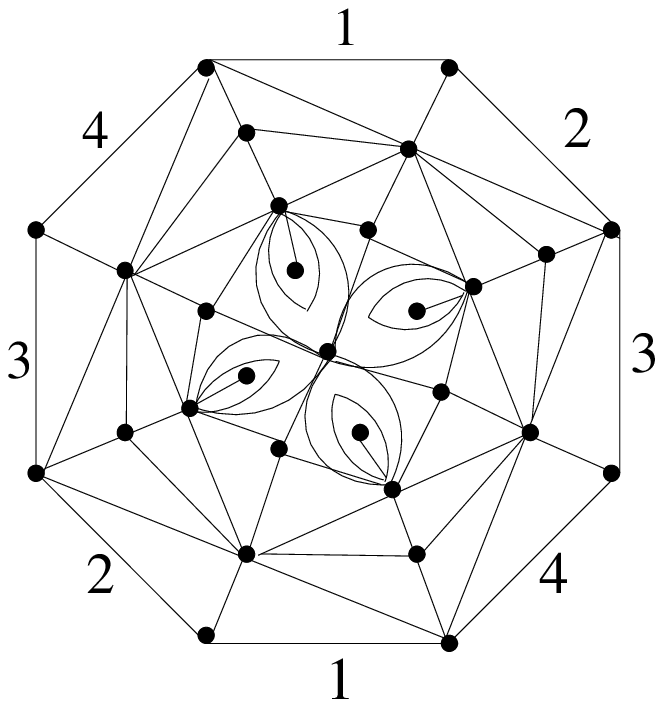}

 Figure 12
\end{figure}

By the proof of the classification theorem of compact surfaces in
\cite{M}, the genus of $S_{n}$ is at most $n$.

Since $T_{n}$ is obtained from a triangulation of $S_{n}$, by
Theorem \ref{thm1.1}(i), $g(T_{n})\leq n$.

On the other hand, by Lemma \ref{2}, $g(T_{n})\geq n$. Hence,
$g(T_{n})=n$.

For the genus $g(S_{n})$ of $S_{n}$, since $n=g(T_{n})\leq
g(S_{n})\leq n$, we also have $g(S_{n})=n$.

Then Theorem \ref{thm1.1}(ii) easily follows from the fact that all closed oriented
surfaces with the same genus are homeomorphic. $\ \ \ \ \ \ \ \ \ \
\ \ \ \ \ \ \ \ \ \ \ \ \ \ \ \ \ \ \ \ \ \ \ \ \ \ \ \ \ \ \ \ \ \
 \ \ \ \ \ \ \ \ \ \ \ \ \ \ \ \ \ \ \ \ \ \ \ \ \ \ \ \ \ \ \ \ \ \
 \ \ \ \ \ \ \ \ \ \ \ \ \ \ \ \ \ \ \ \ \ \ \ \ \ \ \ \ \ \ \ \ \
 \ \ \ \ \ \ \ \ \ \ \ \ \ \ \ \ \ \ \ \ \Box$
\\

\subsection{Applications and further problems}
As an application of Theorem \ref{thm1.1}(i), we firstly give two
corollaries.

\begin{corollary}\label{cor5.2}
Let S be a surface of genus $0$ and $M$ be a set of marked points of
$S$. Given any triangulation $T$ of $(S, M)$, suppose $Q$ is the
associated cluster quiver from $T$, then all quivers in the
mutation-equivalent class of $Q$ are of genus $0$.
\end{corollary}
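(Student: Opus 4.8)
The plan is to deduce this corollary from Theorem~\ref{thm1.1}(i) together with the compatibility between quiver mutation and flip of triangulations established in Proposition~2.6 (the result stating $B(\overline{T})=\mu_k(B(T))$). First I would observe that, since $S$ has genus $0$, Theorem~\ref{thm1.1}(i) already gives $g(Q)\leq 0$, hence $g(Q)=0$; so the content of the corollary is really that this survives under arbitrary mutation. The key point is that mutation of a cluster quiver coming from a surface stays within the class of quivers coming from that same surface: if $Q'=\mu_{k_1}\cdots\mu_{k_\ell}(Q)$, then applying the corresponding sequence of flips to $T$ produces a triangulation $T'$ of the \emph{same} surface $(S,M)$ with $B(T')$ mutation-equivalent to $B(T)$, so that $Q'$ is (isomorphic to) the cluster quiver associated with $T'$.

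The steps, in order, are: (1) recall that any $Q'$ in the mutation-equivalent class of $Q$ is obtained from $Q$ by a finite sequence of mutations $\mu_{k_1},\dots,\mu_{k_\ell}$; (2) translate this, via the matrix--quiver correspondence of \S2.1 and Proposition~2.6, into a sequence of flips carrying $T$ to a triangulation $T'$ of $(S,M)$, with $Q'\cong Q_{B(T')}$; here one uses that every arc admits a flip, so the sequence of flips is always defined; (3) note that $(S,M)$ still has genus $0$; (4) apply Theorem~\ref{thm1.1}(i) to $T'$ to conclude $g(Q')\leq 0$, hence $g(Q')=0$. One should also remark that genus is defined on the underlying simple graph and is invariant under quiver isomorphism, so passing between $Q'$ and $Q_{B(T')}$ causes no loss.

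The only delicate point — and the step I expect to be the main obstacle — is (2): one must be sure that a quiver mutation on $Q=Q_{B(T)}$ really does correspond to a flip on a triangulation, i.e., that the flip is always available and that the flipped triangulation is again a triangulation of the \emph{same} surface $(S,M)$ rather than some degenerate configuration. For surfaces this is a standard fact (every arc in an ideal triangulation can be flipped, with the possible exception tied to self-folded triangles, which is handled in \cite{fosth}), and Proposition~2.6 is exactly the statement that the resulting matrix is $\mu_k(B(T))$. So the obstacle is not a genuine difficulty but a matter of citing the flip/mutation compatibility correctly and noting that the genus of the ambient surface is unchanged by a flip. Once that is in place, the corollary is immediate from Theorem~\ref{thm1.1}(i).
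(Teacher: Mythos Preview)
Your proposal is correct and follows essentially the same idea as the paper: every quiver mutation-equivalent to $Q$ arises from a triangulation of the \emph{same} surface $(S,M)$, so Theorem~\ref{thm1.1}(i) applies with $g=0$. The only difference is that where you rebuild this fact from the flip/mutation compatibility (Proposition~2.6) and then have to worry about whether each flip is defined (the self-folded obstruction), the paper simply invokes Proposition~12.3 of \cite{fosth}, which states directly that the mutation-equivalent class of $Q$ consists precisely of quivers coming from triangulations of $(S,M)$; this is exactly how the companion Corollary~3.4 is proved, and Corollary~\ref{cor5.2} itself is left without an explicit proof. Citing that proposition is the cleaner route, since it absorbs the delicate point you flagged.
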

Besides the cluster quivers of type E in Section 3.1, this corollary
gives another class of cluster quivers of finite mutation type whose
genuses are invariant under mutation.
\begin{corollary}
Let $S$ be a surface of genus $g$ with $M$ the set of marked points.
For any triangulation $T$ of $(S, M)$, let $Q$ be its corresponding
quiver and let $Q'$ be another cluster quiver of genus $g'$ such
that $g'>g$, then $Q$ and $Q'$ are not mutation-equivalent.
\end{corollary}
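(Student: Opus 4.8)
The plan is to deduce this directly from Theorem \ref{thm1.1}(i), combined with the observation that the entire mutation-equivalence class of a quiver coming from a surface still consists of quivers coming from that \emph{same} surface. By hypothesis $Q$ is the cluster quiver $Q_{B(T)}$ associated with a triangulation $T$ of $(S,M)$, and $(S,M)$ has genus $g$. Any cluster quiver $Q''$ that is mutation-equivalent to $Q$ has the form $Q''=\mu_{k_1}\cdots\mu_{k_l}(Q)$; applying the compatibility between quiver mutation and matrix mutation, and then the compatibility between matrix mutation and flips of triangulations (the Proposition stating $B(\overline{T})=\mu_k(B(T))$), one sees that $Q''=Q_{B(T'')}$ for a triangulation $T''$ obtained from $T$ by a sequence of flips. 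Since a flip replaces one arc by another and the number of arcs is fixed by the formula $n=6g+3b+3p+c-6$, each $T''$ is again a triangulation of the same surface $(S,M)$ of genus $g$.

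Next I would apply Theorem \ref{thm1.1}(i) to each such $T''$: the genus of $Q''=Q_{B(T'')}$ is at most the genus $g$ of $S$. Hence every quiver in the mutation-equivalence class of $Q$ has genus $\le g$. Since $Q'$ has genus $g'>g$ by hypothesis, $Q'$ cannot lie in this class, i.e.\ $Q$ and $Q'$ are not mutation-equivalent. Note this recovers Corollary \ref{cor5.2} as the special case $g=0$.

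The step I expect to require the most care is the claim that an arbitrary quiver mutation of $Q_{B(T)}$ is realized by a flip of a triangulation of the \emph{same} surface. For an arc lying inside a self-folded triangle the ordinary flip is not defined, and one must pass to tagged triangulations in the sense of \cite{fosth}; however, tagged triangulations of $(S,M)$ still live on the surface $S$, so the drawing argument in the proof of Theorem \ref{thm1.1}(i)---placing the block of each puzzle piece into the face it bounds and gluing the blocks along shared edges---applies there as well and still yields genus $\le g$. (Alternatively, one can simply invoke that ``coming from surface'' is preserved under mutation, together with the genus bound of Theorem \ref{thm1.1}(i), to conclude the genus bound $\le g$ holds throughout the mutation class without explicitly tracking flips.) Beyond this point the argument is entirely formal.
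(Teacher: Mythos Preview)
Your proof is correct and follows essentially the same approach as the paper: bound the genus of every quiver in the mutation class of $Q$ by $g$ via Theorem~\ref{thm1.1}(i), then conclude. The only difference is packaging: where you unpack the flip/mutation compatibility and flag the self-folded case, the paper simply invokes Proposition~12.3 of \cite{fosth} to assert that every quiver mutation-equivalent to $Q$ arises from some triangulation of the \emph{same} surface $(S,M)$---which is precisely the ``alternative'' you mention at the end of your write-up.
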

\begin{proof}
According to Proposition 12.3 in \cite{fosth}, all quivers in the
mutation-equivalent class of $Q$ are the corresponding quivers of
some triangulations of $(S, M)$. Hence, by Theorem \ref{thm1.1}(i),
the genuses of these quivers are not greater than $g$.
 Hence $Q'$ is not in the mutation-equivalent class of $Q$, that is, $Q$ and
$Q'$ are not mutation-equivalent.
\end{proof}
This corollary gives us a necessary condition for two
quivers, one is from triangulation of a surface and the other is non-planar, with the same numbers of vertices,  to be mutation-equivalent.

\begin{remark}\label{rem3.5}
An easy calculation shows that the number of marked points on the
closed surface $S_{n}$ in the proof of Theorem \ref{thm1.1}(ii) is
$4n^{2}+2n+2$. For example in the case $n=2$, one can easily see
that there are $22$ marked points on $S_{2}$, here the outermost $8$
marked points in $S_{2}^{'}$(see Figure 12) are glued into one.
\end{remark}

Theorem \ref{thm1.1}(ii) tells us that given a closed surface $S$ of
genus $n$, the upper bound of genuses of quivers from triangulations
of $S$ given in part (i) of Theorem \ref{thm1.1} can be reached.

On the other hand, the lower bound $0$ of genuses can also be reached, that is, given any closed oriented surface $S$ with genus $n$, there
always exists a triangulation $T$ of $S$ such that the corresponding cluster
quiver $Q$ of $T$ is planar.

In fact, if the closed surface is a sphere, it obviously holds by
Corollary \ref{cor5.2}. If the closed surface $S$ is of genus
$n(\geq 1)$, then it is homeomorphic to the connected sum of
$n$-tori. In this case, the symbol of its corresponding polygon is
$a_{1}b_{1}a_{1}^{-1}b_{1}^{-1}a_{2}b_{2}a_{2}^{-1}b_{2}^{-1}\cdots
a_{n}b_{n}a_{n}^{-1}b_{n}^{-1}$. A triangulation $T$ of $S$ with two
punctures is as follows in Figure 13. For this triangulation, the
outer $4n$ vertices in fact come from the same puncture and the only
inner vertex is the other puncture. One can easily check that the
corresponding cluster quiver $Q$ of $T$ is planar.

\begin{figure}[h] \centering

  \includegraphics*[176, 521][275, 616]{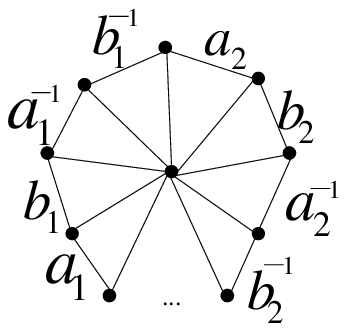}

Figure 13
\end{figure}
Restricting the discussion to torus, we can reach the
following conclusion:
\begin{proposition}\label{prop3.8} For a given cluster quiver $Q$ from
the torus $S$ with $p$ punctures, there exists at least one planar
quiver in the mutation-equivalent class of $Q$.
\end{proposition}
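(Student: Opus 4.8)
The plan is to reduce the claim to the already-known fact that \emph{some} triangulation of the torus yields a planar cluster quiver (this was established just above, via the triangulation in Figure 13 for $n=1$, which uses two punctures), and then to handle the number of punctures by a mutation/flip argument. First I would recall that by Proposition 12.3 in \cite{fosth}, the whole mutation-equivalent class of a cluster quiver coming from a surface $(S,M)$ consists precisely of the cluster quivers attached to the various ideal triangulations of that \emph{same} surface $(S,M)$; in particular it does not change the underlying surface or the number of punctures. So it suffices, for each $p$, to exhibit one ideal triangulation of the torus with $p$ punctures whose associated quiver is planar, and then invoke the fact that all triangulations of $(S,M)$ are flip-connected (hence their quivers are mutation-equivalent), so that planar quiver lies in the mutation class of $Q$.

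The construction I would use: start from the flat square model of the torus with opposite sides identified (symbol $aba^{-1}b^{-1}$), place the $p$ punctures, and triangulate so that the resulting quiver is built only out of blocks that keep it planar — concretely, I would arrange the punctures along a single ``meridian'' arc and fan-triangulate, so that the quiver is glued from puzzle pieces of the first and second type in a chain. For $p=1$ one recovers the classical once-punctured-torus triangulation with $3$ arcs (the associated quiver is the Markov quiver, which is planar on $3$ vertices); for $p=2$ one takes the triangulation of Figure 13 in the case $n=1$; for larger $p$ one inserts additional punctures into existing triangles (each insertion subdivides a triangle into three, i.e.\ glues in one more block), and I would check that each such local modification only adds a block whose gluing does not create a $K_5$ or $K_{3,3}$ minor, so planarity is preserved. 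Since the quiver at each stage is block-decomposable by construction, it indeed comes from the torus with $p$ punctures, and its whole mutation class — which by the flip/mutation compatibility (Proposition 4.8 of \cite{fosth}) equals the set of quivers of all triangulations of the $p$-punctured torus — contains $Q$. Hence a planar quiver lies in the mutation-equivalent class of $Q$.

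The main obstacle I expect is the bookkeeping for general $p$: one must verify that the explicit triangulation of the $p$-punctured torus really does give a planar underlying graph, i.e.\ that the chain of blocks of type II and IV glued according to the torus identifications does not accidentally produce a nonplanar configuration. The cleanest way to discharge this is to give the planar drawing directly — lay the $p-1$ ``inner'' blocks in a row inside a disk and route the two identified boundary arcs of the square around the outside — and then observe, as in the proof of Theorem \ref{thm1.1}(i), that the block structure forces the underlying graph to embed in the plane; the identifications of the square only merge boundary vertices, which can be done without introducing crossings. Once that single planar representative is in hand, everything else is formal.
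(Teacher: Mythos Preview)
Your reduction is exactly the paper's: invoke Proposition~12.3 of \cite{fosth} so that all quivers coming from triangulations of the $p$-punctured torus lie in one mutation class, and then it suffices to exhibit a single triangulation with planar associated quiver. So the strategy is correct and identical.

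The difference is only in the explicit construction of that planar representative. The paper does not induct on $p$ by inserting punctures into triangles, nor does it appeal to Figure~13. It gives one uniform picture for every $p$: through each of the $p$ punctures draw a meridian loop (an arc perpendicular to the basic circle, based at that puncture); these $p$ loops cut the torus into $p$ annuli. On each annulus draw one arc joining the two boundary punctures (turning it into a rectangle) and one diagonal. The quiver of each such triangulated rectangle is a small planar gadget (Figure~14), and the full quiver is obtained by gluing $p$ copies of this gadget cyclically along their outlets, which is manifestly planar (Figure~15 shows $p=3$). This buys a direct planar drawing with no need to track $K_5$ or $K_{3,3}$ minors and no inductive step.

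Your puncture-insertion approach would also succeed, but note the subtlety you would have to address: subdividing a triangle of the surface preserves planarity of the quiver only if the corresponding $3$-cycle bounds a \emph{face} of your current planar embedding, which is not automatic and must be arranged at each step. The paper's annular decomposition sidesteps this entirely.
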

\begin{proof}
 According to
Proposition 12.3 in \cite{fosth}, the corresponding quivers from all
triangulations of $S$ are mutually mutation-equivalent. Hence, we
only need to find a triangulation $T$ of $S$ such that its
corresponding quiver is planar.

For the convenience of describing the desired
triangulation, we first restate how a torus is
constructed. Given two circles $C$ and $C'$, assume the radius of
$C$ is greater than that of $C'$. Let the center of $C'$ run along
$C$ for one round, then a torus is built. The circle $C$ is called a
{\bf basic circle} for this torus.

 For the torus $S$ with $p$ punctures, we construct a
triangulation $T$ as follows:

For each puncture, construct a closed arc on $S$ perpendicular to
the basic circle such that its two endpoints are both coincided at
this puncture.  Thus, we have such $p$ arcs.
 These $p$ arcs cut down the torus
into $p$ pieces of cylinders. For each cylinder, drawing an arc
between two punctures, we obtain a rectangle. Moreover, we draw a
diagonal in this rectangle. The corresponding quiver from such a
rectangle with diagonal is shown as in the following Figure 14.

\begin{figure}[h] \centering

  \includegraphics*[250,492][320,544]{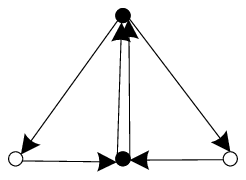}

 Figure 14
 \end{figure}

All such $p$ rectangles with diagonal are arranged continuously
together to form a graph. The quiver $Q$ of $T$ is obtained by
gluing $p$ pieces of such quivers along the outlets. Obviously, it
is a planar quiver. For example, in the case $p=3$, the
triangulation can be shown as in Figure 15(1), where the numbers
$1,\cdots,9$ label the arcs, and its corresponding cluster quiver is
shown as in Figure 15(2).

\begin{figure}[h] \centering

  \includegraphics*[111,431][441,534]{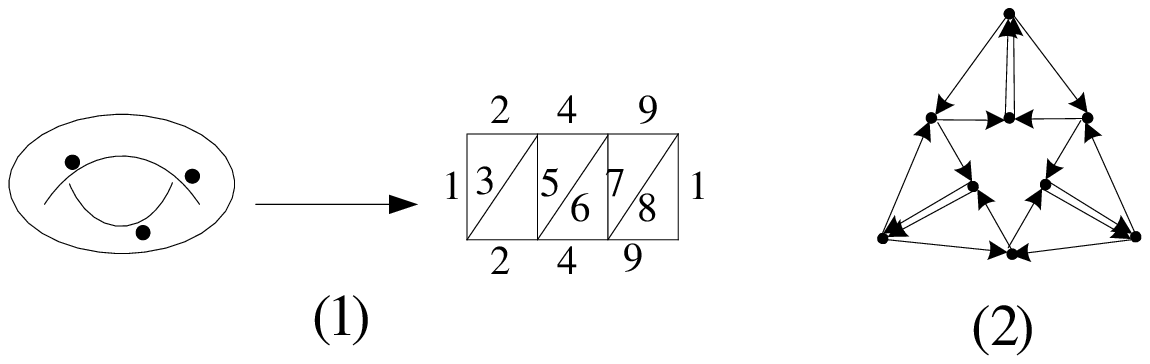}

 Figure 15
\end{figure}
\end{proof}
Since both of the upper bound and lower bound can reach for genuses
of cluster quivers from closed surface, based on Theorem
\ref{thm1.1} and Proposition \ref{prop3.8}, we propose the further
interesting problems as follows:
\begin{problem}\label{prb3.7}
For any closed surface $S$ with genus $n$ and $0\leq i\leq n$, does
there exist a certain number of punctures and an ideal triangulation
$T^{(i)}$ of $S$ such that the corresponding cluster quiver $Q_{i}$
from $T^{(i)}$ is of genus $i$?
\end{problem}

\begin{problem}\label{prb3.8}
Given a closed surface $S$ of genus $n$. Find out the minimal number
of punctures on $S$ with the property that there exists an ideal
triangulation $T$ of $S$ such that the corresponding cluster quiver
$Q_n$ of $T$ is exactly of genus $n$.
\end{problem}
For the case of torus, we know at least one planar quiver in each
mutation-equivalent class according to Proposition \ref{prop3.8}.
Hence for a given number of punctures we can check the corresponding
mutation-equivalent class of this planar quiver by Keller's quiver mutation in Java
\cite{k}. Since the genus of a quiver has nothing to do with the
orientations of arrows, we can choose the mutation-equivalent class
under graph isomorphism when doing quiver mutation in Java.

For the cases $p=1$ and $p=2$, all quivers in their two
mutation-equivalent classes are planar. When $p=3$, there exists
exactly one quiver of genus $1$ (see Figure 16) in the mutation
class. Therefore, the answer of Problem \ref{prb3.8} for the case of
torus is $p=3$, which is much smaller than the number $4\times
1^{2}+2\times 1+2=8$ of punctures given in Remark \ref{rem3.5} when
constructing $T_{1}$ from torus.
\begin{figure}[h] \centering

  \includegraphics*[193,471][272,516]{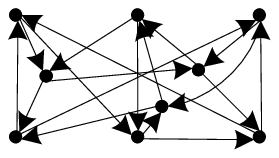}

 Figure 16
\end{figure}

{\bf Acknowledgements.}~~ The authors are grateful to the referees
for their important suggestions in improving the quality of
this paper. The authors also warmly thank Andrei Zelevinsky for his
 helpful comments and suggestions. We dedicate this paper to his
memory. 

 This work was supported by the National Natural Science Foundation of China (No.11271318, No.11171296 and No.J1210038) and the Specialized Research Fund for the Doctoral Program of Higher Education of China (No.20110101110010) and the Zhejiang Provincial Natural Science Foundation of China (No.LZ13A010001).


\begin{thebibliography}{abcdsfgh}

\mbibitem{fst} Felikson, A., Shapiro, M., Tumarkin, P.,:
Skew-symmetric cluster algebras of finite mutation type, arXiv:
0811.1703.

\mbibitem{fosth} Fomin, S., Shapiro, M., Thurston, D.,: Cluster
algebras and triangulated surfaces. Part I: Cluster complexes. Acta
Math. 201, 83-146(2008).

\mbibitem{fz1} Fomin, S., Zelevinsky, A.,: Cluster algebras I:
Foundations. J. Am. Math. Soc. 15, 497-529(2002).

\mbibitem{fz2} Fomin, S., Zelevinsky, A.,: Cluster algebras II:
Finite type classification. Inven. Math. 154, 63-121(2003).

\mbibitem{GT} Gross, J., Tucker, T.,: Topological graph theory,
Wiley interscience Series in Discrete Mathematics and Optimization,
New York(1987).

\bibitem{k} Keller, B., Quiver mutation in Java.
http://www.institut.math.jussieu.fr/quivermutation.

\bibitem{M} Massey, W.,: Algebraic topology: an introduction,
Graduate Texts in Mathematics 56, Springer-Verlag, New York(1991).

\mbibitem{Sc} Schiffler, R.,: Cluster algebras and cluster
categories. Lecture notes for the XVIII Latin American Algebra
Colloquium, San Pedro Brazil, 2009.

\mbibitem{S} Seven, A.,: Quivers of finite mutation type and
skew-symmetric matrices. Linear Algebra and its application. 433,
1154-1169(2010).

\mbibitem{V} Vatne, D.,: The mutation class of $D_{n}$ quivers.
Communication in Algebra, 38: 3, 1137-1146.
\end{thebibliography}
\end{document}